\newcommand{\qed}{\hfill \rule{2.5mm}{2.5mm}}
\newcommand{\N}{{\mathbb N}}
\begin{document}
\newtheorem{thm}{Theorem}[section]
\newtheorem{defs}[thm]{Definition}
\newtheorem{lem}[thm]{Lemma}
\newtheorem{note}[thm]{Note}
\newtheorem{cor}[thm]{Corollary}
\newtheorem{prop}[thm]{Proposition}
\renewcommand{\theequation}{\arabic{section}.\arabic{equation}}
\newcommand{\newsection}[1]{\setcounter{equation}{0} \section{#1}}
\title{Strong completeness  of a class of $L^2$-type Riesz spaces
      \footnote{{\bf Keywords:} Strong completeness; Riesz spaces; conditional expectation operators; Riesz representation; strong dual.\
      {\em Mathematics subject classification (2010):} 46B40; 60F15; 60F25.}}
\author{Wen-Chi Kuo\footnote{Supported in part by  National Research Foundation of South Africa grant number CSUR160503163733.},
School of Mathematics\\
University of the Witwatersrand\\
Private Bag 3, P O WITS 2050, South Africa\\ \\
Anke Kalauch\\
TU-Dresden\\
\\ Bruce A. Watson \footnote{Supported in part by the Centre for Applicable Analysis and
Number Theory
.} \\
School of Mathematics\\
University of the Witwatersrand\\
Private Bag 3, P O WITS 2050, South Africa }
\maketitle
\abstract{\noindent
Strong convergence and convergence in probability were generalized to the setting of a Riesz space with conditional expectation operator, $T$, in [{{\sc Y. Azouzi, W.-C. Kuo, K. Ramdane, B. A. Watson}, {Convergence in Riesz spaces with conditional expectation operators}, {\em Positivity}, {\bf 19} {(2015), 647-657}}] as $T$-strong convergence and convergence in $T$-conditional probability, respectively. Generalized $L^{p}$ spaces for the cases of $p=1,2,\infty$,  were discussed in the setting of Riesz spaces as ${L}^{p}(T)$ spaces in [{{\sc C. C. A. Labuschagne, B. A. Watson}, {Discrete stochastic integration in Riesz spaces}, {\em Positivity}, {\bf 14} {(2010), 859-875}}].
An $R(T)$ valued norm, for the cases of $p=1,\infty,$ was introduced on these spaces in 
[{{\sc W. Kuo, M. Rogans, B.A. Watson}, {Mixing processes in Riesz spaces}, {\em Journal of Mathematical Analysis and Application}, {\bf 456} {(2017), 992-1004}}]
where it was also shown that $R(T)$ is a universally complete $f$-algebra and that these spaces are $R(T)$-modules.
In [{{\sc Y. Azouzi, M. Trabelsi}, {$L^p$-spaces with respect to conditional expectation on Riesz spaces}, {\em Journal of Mathematical Analysis and Application}, {\bf 447} {(2017), 798-816}}] functional calculus was used to consider  ${L}^{p}(T)$  for $p\in (1,\infty)$.
The strong sequential completeness of the space ${L}^{1}(T)$, the natural domain of the conditional expectation operator $T$,
and the strong completeness of  ${L}^{\infty}(T)$ was established in 
[{{\sc W.-C. Kuo, D. Rodda, B. A. Watson}, {Sequential strong completeness of the natural domain of Riesz space conditional expectation operators}, {\em Proc. AMS}, {\bf 147} {(2019), 1597–1603}}].
In the current work the $T$-strong completeness of ${L}^{2}(T)$ is established along with a Riesz-Fischer type theorem where the duality is with
respect to the $T$-strong dual. It is also shown that the conditional expectation operator $T$ is a weak order unit for the $T$-strong dual.}
\parindent=0in
\parskip=.2in

\section{Introduction}

Strong convergence was generalized to Dedekind complete Riesz spaces with a conditional expectation operator in \cite{AKRW} as $T$-strong convergence.
Generalized $L^{p}$ spaces for $p=1,2,\infty$ were discussed in the setting of Riesz spaces as ${L}^{p}(T)$ spaces in \cite{LW}. An $R(T)$ valued norm, for the cases of $p=1,\infty,$ was introduced on the ${L}^{p}(T)$  spaces in \cite{KRW} where it was also shown that $R(T)$ is a universally complete $f$-algebra and that these spaces are $R(T)$-modules.
More recently, in \cite{AT}, the ${L}^{p}(T)$, for $p\in (1,\infty)$, spaces were considered. We also refer the reader to \cite{JHVDW} for an interesting study of sequential order convergence in 
vector lattices using convergence structures and filters.
In \cite{KRodW} the $T$-strong sequential completeness of the natural domain,  ${L}^{1}(T)$, of the Riesz space conditional expectation operator $T$
was established, i.e. that each $T$-strong Cauchy sequence in ${L}^{1}(T)$ converges $T$-strongly in ${L}^{1}(T)$.

\begin{defs}
	We say that a net  $(f_\alpha)$ in ${\mathcal{L}}^p(T)$, where $p\in [0,\infty]$, 
	is a strong Cauchy net if 
	$$v_\alpha:=\sup_{\beta,\gamma\ge \alpha}\|f_\beta-f_\gamma\|_{T,p}$$
	is eventually defined in $R(T)$ and has order limit zero.
\end{defs}

The term $T$-strong here means  with respect to the $R(T)$ valued norm induced by the conditional expectation operator $T$ in the given space.  It was also shown that  ${L}^{\infty}(T)$ is $T$-strongly complete i.e. that every $T$-strong Cauchy net in ${L}^{\infty}(T)$ is $T$-strongly convergent. 

In the current work the $T$-strong completeness of ${L}^{2}(T)$ is established, i.e. that each net in ${L}^{2}(T)$ which is Cauchy with respect to the $R(T)$-valued norm, $f\mapsto (T|f|^2)^{1/2}:=\|f\|_{T,2}$,  is convergent in ${L}^{2}(T)$ with respect to this norm.
This is proved via the a Riesz-Fischer type theorem where the duality is with
respect to the $T$-strong dual of ${L}^{2}(T)$. It is also shown that the conditional expectation operator $T$ is a weak order unit for the $T$-strong dual of ${L}^{2}(T)$.

The issue of completeness of ${L}^{2}(T)$ is important in the
theory of stochastic integrals in Riesz spaces, since these integrals are defined
to be limits of Cauchy nets in ${L}^{2}(T)$,
see for example  \cite{GL}.
The results also impact on the study of martingales in Riesz spaces, see \cite{Stoica, Troitsky}.

\section{Preliminaries}

Throughout this work $E$ will denote a Dedekind complete Riesz space with weak order unit and $T$ will denote a strictly positive conditional expectation operator on $E$. 
By $T$ being a conditional expectation
operator on $E$ we mean that $T$ is a linear positive order continuous projection on $E$ which maps weak order units to weak order units and has range $R(T)$ closed with respect to order limits in $E$. 
This gives that there is at least one weak order unit, say $e$, with $Te=e$, and that 
$R(T)$ is Dedekind complete with considered as a subspace of $E$. 
By $T$ being strictly positive we mean that if
$f\in E_+$, the positive cone of $E$, and $f\ne 0$ then $Tf\in E_+$ and $Tf\ne 0$.

A strictly positive conditional expectation operator, $T$, on a Dedekind complete Riesz space with weak order unit, can be extended to a strictly positive conditional expectation operator, also denoted $T$,  on its natural domain, denoted $L^1(T):=\mbox{dom}(T)-\mbox{dom}(T)$.
We say that $E$ is $T$-universally complete if $E=L^1(T)$. 
From the definition of $\mbox{dom}(T)$, see \cite{KLW-exp}, $E$ is $T$-universally complete if and only
 if for each upwards directed net $(f_{\alpha})_{\alpha \in \Lambda}$ in $E^+$ such that $(Tf_{\alpha})_{\alpha \in \Lambda}$ is order bounded in $E_{u}$, we have that $(f_{\alpha})_{\alpha \in \Lambda}$ is order convergent in $E$. 
 Here $E_u$ denotes the universal completion of $E$, see \cite[page 323]{L-Z}. 
 $E_u$ has an $f$-algebra structure which can be chosen so that $e$ is the multiplicative identity.
For $T$ acting on $E=L^1(T)$,  $R(T)$ is a universally complete $f$-algebra and $L^1(T)$ is an $R(T)$-module.
 From \cite[Theorem 5.3]{KLW-exp}, $T$ is an averaging operator, i.e. if $f\in R(T)$ 
and $g\in E$ then $T(fg)=fT(g)$.
 This prompts the definition of an $R(T)$ (vector valued) norm $\|\cdot\|_{T,1}:=T|\cdot|$ on $L^1(T)$. 
The homogeneity is with respect to multiplication by elements of $R(T)^+$.

The definition of the Riesz space
 $L^{2}(T):=\{f\in L^1(T)\,|\, f^2\in L^1(T)\}$ was given in  
 \cite{LW}.
 By the averaging property,  $L^{2}(T)$ is an $R(T)$-modules and the map
 $$f\mapsto\|f\|_{T,2}:=(T(f^2))^{1/2}, \quad f\in L^{2}(T)$$
 is an $R(T)$-valued norm on $L^{2}(T)$. Aspects for this development for $L^{p}(T)$ with general  $1<p<\infty$ can be found in \cite{AT, grobler-1}. Here the multiplication is as defined in the $f$-algebra $E_u$.
Proofs of various H\"older type inequalties in Riesz spaces with conditional expectation operators can be found in \cite{KRW} and \cite{AT}. In particular,
\begin{equation}\label{holder}
T|fg|\le \|f\|_{T,2}\|g\|_{T,2}, \, \mbox{ for all } f,g\in {{L}}^2(T).
\end{equation}

If $F$ is a universally complete Riesz space with weak order unit, say $e$, then $F$ is an $f$-algebra and $e$ can be taken as the algebraic unit, see \cite[Theorem 3.6]{V-E}.
 We recall from \cite[Appendix]{KKW-1} the following material on partial inverses in Riesz spaces.

\begin{defs}
Let $F$ be a  universally complete Riesz space with weak order unit, say $e$ and take $e$ as the algebraic unit of  the associated $f$-algebra structure. 
We say that $g\in F$ has a partial inverse if there exists $h\in F$ such that $gh=hg=P_{|g|}e$ where $P_{|g|}$ denotes the band projection onto the band generated by $|g|$. 
We refer to $h$ as the canonical partial inverse of $g$ if in addition to being a partial inverse to $g$, we have that $(I-P_{|g|})h=0$, i.e.  $h\in {\cal B}_{|g|}$,  where ${\cal B}_{|g|}$ is the band generated by $|g|$.
\end{defs}

The following result gives existence, uniqueness and positivity results concerning partial inverses and canonical partial inverses. 
We denote by ${\cal B}_f$ the band generated by $f$ and by $P_f$ the band projection onto ${\cal B}_f$.

 \begin{thm}
Let $F$ be a  universally complete Riesz space with weak order unit, say $e$, which also take as the algebraic unit of  the associated $f$-algebra structure.  Each $g\in F$ has a partial inverse $h\in F$. 
The canonical partial inverse of $g$ is unique and in this case $g$ is also the canonical partial inverse of $h$. If $g\in F^+$ then so is its canonical partial inverse.
  \end{thm}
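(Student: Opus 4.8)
The plan is to isolate the analytic heart of the statement as a single claim and then obtain everything else by algebraic manipulations in the commutative semiprime $f$-algebra $F$; note that constructing the \emph{canonical} partial inverse settles the existence of a partial inverse a fortiori. The claim I would prove first is that \emph{every weak order unit $u\in F^+$ has a multiplicative inverse $u^{-1}\in F^+$, $uu^{-1}=e$.} Granting this, existence of a canonical partial inverse follows by reduction to the positive case: for $g\in F^+$ the element $w:=g+(I-P_{|g|})e$ is a positive weak order unit (it coincides with $g$ on ${\cal B}_{|g|}$ and with $e$ on ${\cal B}_{|g|}^d$), and, using $g=P_{|g|}w$, the element $h:=P_{|g|}e\cdot w^{-1}$ satisfies $gh=P_{|g|}(ww^{-1})=P_{|g|}e$ with $h\in{\cal B}_{|g|}$. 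For general $g$ I would apply this to $g^+$ and $g^-$ to get positive canonical partial inverses $a,b$ and set $h:=a-b$; since $g^+\perp b$ and $g^-\perp a$ the cross terms vanish and $gh=g^+a+g^-b=P_{g^+}e+P_{g^-}e=P_{|g|}e$, with $h\in{\cal B}_{g^+}+{\cal B}_{g^-}={\cal B}_{|g|}$.

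The main obstacle is the claim itself, i.e. producing $u^{-1}$ \emph{inside} $F$; this is exactly where universal completeness (Dedekind plus lateral completeness) is indispensable. For $n\in\N$ let $q_n$ be the band projection onto ${\cal B}_{(u-\frac1n e)^+}\cap{\cal B}_{(ne-u)^+}$, the band on which $u$ is squeezed between $\frac1n e$ and $ne$. Inside the order-unit ideal generated by $q_ne$, which is a uniformly complete $f$-algebra with unit $q_ne$ (a $C(K)$), the element $q_nu$ is bounded away from $0$ and $\infty$ and hence invertible; call its inverse $k_n\ge0$, so $uk_n=q_ne$. Since the inverse restricted to a smaller band is again the inverse there, $q_nk_{n+1}=k_n$, so the increments $k_{n+1}-k_n=(q_{n+1}-q_n)k_{n+1}$ are positive and \emph{pairwise disjoint}; lateral completeness therefore guarantees that $k:=\sup_n k_n$ exists in $F$. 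As $q_n\uparrow e$ (because $u$ is a weak order unit in $F$), separate order continuity of multiplication gives $uk=\sup_n uk_n=\sup_n q_ne=e$, so $u^{-1}=k$. The same conclusion is transparent from the Maeda--Ogasawara--Vulikh representation $F\cong C_\infty(X)$, $X$ extremally disconnected, where $u^{-1}$ is the extension of the pointwise reciprocal; I would keep this as a check, and possibly as an alternative proof.

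The remaining assertions are purely algebraic and rest on $F$ being semiprime (so $|xy|=|x||y|$ and, for $x,y\ge0$, $xy=0$ iff $x\wedge y=0$) and on bands being ring ideals. For \emph{uniqueness}, if $h_1,h_2\in{\cal B}_{|g|}$ both satisfy $g\,h_i=P_{|g|}e$, then $d:=h_1-h_2$ has $gd=0$, so $|g||d|=0$, whence $|g|\wedge|d|=0$, i.e. $d\in{\cal B}_{|g|}^d$; as also $d\in{\cal B}_{|g|}$ we get $d=0$. For the \emph{reciprocity} that $g$ is the canonical partial inverse of $h$: from $gh=P_{|g|}e\ge0$ we have $|g||h|=P_{|g|}e\in{\cal B}_{|h|}$ (a band, hence a ring ideal), so ${\cal B}_{|g|}\subseteq{\cal B}_{|h|}$, while $h\in{\cal B}_{|g|}$ gives the reverse inclusion; thus $P_{|h|}=P_{|g|}$, and commutativity yields $hg=P_{|h|}e$ with $g\in{\cal B}_{|h|}$. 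Finally, for \emph{positivity}, if $g\ge0$ write $h=h^+-h^-$; multiplication by $g\ge0$ preserves disjointness, so $gh^+\perp gh^-$, and as $gh=P_ge\ge0$ has zero negative part, $gh^-=0$; then $g\wedge h^-=0$ puts $h^-\in{\cal B}_g^d$, while $h^-\in{\cal B}_g$, forcing $h^-=0$ and $h\ge0$.
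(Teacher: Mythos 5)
Your proposal is correct, but there is nothing in the paper to compare it against line by line: the paper states this theorem without proof, recalling it from the appendix of \cite{KKW-1}, so your argument stands as a self-contained substitute rather than a variant of an in-text proof. Your structure is sound: the whole analytic content is concentrated in your claim that every weak order unit $u\in F^+$ is invertible in the $f$-algebra $F$ (a known property of universally complete unital $f$-algebras), and your proof of it is right --- on the band $B_n$ with projection $q_n$ one has $\frac1n q_ne\le q_nu\le nq_ne$ because $P_{(u-\frac1n e)^+}(u-\frac1n e)=(u-\frac1n e)^+\ge 0$ and similarly for the upper bound, the principal ideal generated by $q_ne$ is uniformly complete (as $F$ is Dedekind complete) hence a $C(K)$ by Yosida, and the disjoint increments $(q_{n+1}-q_n)k_{n+1}$ are glued by lateral completeness. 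The subsequent reductions (weak order unit $w=g+(I-P_{|g|})e$ for $g\ge 0$, then $h=a-b$ via $g^\pm$, uniqueness via semiprimeness, reciprocity via ${\cal B}_{|g|}={\cal B}_{|h|}$, positivity via $gh^-=(gh)^-=0$) all check out. Three points where you invoke standard facts that deserve explicit justification or citation: (i) $q_n\uparrow e$ uses not only that $u$ is a weak order unit but also that $e$ is one and that $F$ is Archimedean --- one needs $\bigvee_n{\cal B}_{(u-\frac1n e)^+}=F=\bigvee_n{\cal B}_{(ne-u)^+}$ (if $x$ is disjoint from all $(ne-u)^+$ then $P_xu\ge nP_xe$ for all $n$, forcing $P_xe=0$ by Archimedeanness, hence $x=0$), and then $q_n=P_nQ_n\ge P_n+Q_n-I\uparrow I$; (ii) identifying the inverse computed in $C(K)$ with an inverse for the multiplication of $F$ uses the uniqueness of the Archimedean $f$-algebra multiplication with prescribed unit $q_ne$; (iii) semiprimeness of $F$ (so that $|xy|=|x|\,|y|$ and, for positive elements, $xy=0$ iff $x\wedge y=0$), which holds because $F$ is a unital Archimedean $f$-algebra, underpins your uniqueness, reciprocity and positivity steps and should be stated once at the outset. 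With those references added, your proof is complete and is essentially the expected route (truncate, invert locally, patch by lateral completeness), the same in spirit as the construction the paper imports from \cite{KKW-1}.
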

\section{Dual spaces}

Let $E=L^{2}(T)$. We say that a map 
${\frak f}:E\to R(T)$
is a $T$-linear functional on $E$ if it is additive, $R(T)$-homogeneous and order bounded.
We denote the space of $T$-linear functionals on $E$ by $E^*$ and call it the $T$-dual of $E$. 
We note that $E^*\subset {\cal L}_b(E, R(T))$, since $R(T)$-homogeneity implies real linearity. 
Further as $R(T)$ is Dedekind complete, so is ${\cal L}_b(E, R(T))$, see \cite[page 12]{AB}, and
${\cal L}_b(E, R(T))={\cal L}_r(E, R(T))$, where ${\cal L}_r(E, R(T))$ denotes the regular operators, see \cite[page 10]{AB}. 
Following the notation of \cite{AB}, we denote the order continuous elements of ${\cal L}_b(E, R(T))$ by ${\cal L}_n(E, R(T))$ and by \cite[page 44]{AB},  ${\cal L}_n(E, R(T))$ is a band in  ${\cal L}_b(E, R(T))$, and since  ${\cal L}_b(E, R(T))$ is Dedekind complete, so is  ${\cal L}_n(E, R(T))$.

If ${\frak f}\in E^*$ and 
there is $k\in R(T)^+$ such that 
\begin{equation}
|{\frak f}(g)|\le k\|g\|_{T,2},\quad\mbox{for all } g\in E,\label{domination}
\end{equation}
we say that ${\frak f}$ is $T$-strongly bounded. 
We denote the space of $T$-strongly bounded $T$-linear functionals on $E$ by 
\begin{eqnarray*}
\hat{E}:=\{{\frak f}\in E^*\,|\, {\frak f} \mbox{ $T$-strongly bounded }\}
\end{eqnarray*}
and refer to it as the $T$-strong dual of $E$. 
Further 
$$\|{\frak f}\|:=\inf\{k\in R(T)^+\,|\,|{\frak f}(g)|\le k\|g\|_{T,2}\quad\mbox{for all } g\in E\}$$
defines and $R(T)$-valued norm on $\hat{E}$ with 
\begin{eqnarray}\label{norm-bound}
|{\frak f}(g)|\le \|{\frak f}\|\,\|g\|_{T,2}
\end{eqnarray}
 for all $g\in L^2(T)$.

We note here that as the map $g\mapsto \|g\|_{T,2}$ is order continuous the domination in (\ref{domination}) gives that each ${\frak f}\in \hat{E}$ is order continuous. Thus $\hat{E}\subset E^*\cap {\cal L}_n(E, R(T))$. 

From \cite{KKW-1} we have the following Riesz-Frechet representation theorem.

\begin{thm}[Riesz-Frechet representation theorem in Riesz space]\label{thm-final}
 The map $\Psi$ defined by $\Psi(f)(g):=T_f(g)=T(fg)$ for $f,g\in {{L}}^2(T)$ is a bijection between $E={{L}}^2(T)$ and, its $R(T)$-homogeneous strong dual,
 $\hat{E}$. This map is additive, $R(T)$-homogeneous 
 and $R(T)$-valued norm preserving in the sense that $\|T_f\|=\|f\|_{T,2}$ for all $y\in {{L}}^2(T)$.
\end{thm}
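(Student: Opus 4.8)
The plan is to dispose of the algebraic and metric properties of $\Psi$ quickly and then to concentrate on surjectivity, which is the substantive point. That $T_f\in\hat E$ for each $f\in L^2(T)$ follows at once from the averaging property of $T$ together with the H\"older inequality (\ref{holder}): the map $g\mapsto T(fg)$ is additive, and $R(T)$-homogeneous since $T(f(rg))=rT(fg)$ for $r\in R(T)$, while $|T_f(g)|\le T|fg|\le\|f\|_{T,2}\|g\|_{T,2}$ shows $T_f$ is $T$-strongly bounded with $\|T_f\|\le\|f\|_{T,2}$. Additivity and $R(T)$-homogeneity of $f\mapsto T_f$ are immediate from the bilinearity of $(f,g)\mapsto T(fg)$. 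For injectivity I would test against $g=f$: if $T_f=0$ then $\|f\|_{T,2}^2=T(f^2)=T_f(f)=0$, and strict positivity of $T$ forces $f^2=0$, hence $f=0$.

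For the norm identity $\|T_f\|=\|f\|_{T,2}$ only the inequality $\|f\|_{T,2}\le\|T_f\|$ remains. Writing $w:=\|f\|_{T,2}\in R(T)^+$ and testing once more with $g=f$ gives, via (\ref{norm-bound}), $w^2=T_f(f)\le\|T_f\|\,w$. Here the partial inverse theorem enters: letting $h\in R(T)^+$ be the canonical partial inverse of $w$, multiplication of $w^2\le\|T_f\|w$ by $h$ yields $w=wP_we\le\|T_f\|P_we=P_w\|T_f\|\le\|T_f\|$ on $\mathcal{B}_w$; off $\mathcal{B}_w$ the bound $\|T_f\|\le w=0$ forces both sides to vanish. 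Hence $w\le\|T_f\|$ throughout and the norm is preserved.

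The main obstacle is surjectivity: given ${\frak f}\in\hat E$ I must produce $f\in L^2(T)$ with ${\frak f}=T_f$. First I would reduce to the positive case. Since $\|\cdot\|_{T,2}$ is monotone and $|{\frak f}|(g)=\sup\{{\frak f}(u):|u|\le g\}$ for $g\in E^+$, the domination (\ref{domination}) passes to $|{\frak f}|$, so ${\frak f}^+,{\frak f}^-\in\hat E$ (the decomposition taken in the Dedekind complete band $\mathcal{L}_n(E,R(T))$) and it suffices to represent a positive ${\frak f}$. For positive ${\frak f}$ the task is a Radon--Nikodym statement: noting that $e\in L^2(T)$ and hence $Pe\in L^2(T)$ for each band projection $P$, the assignment $P\mapsto{\frak f}(Pe)$ behaves like an order-continuous $R(T)$-valued charge which, by strict positivity of $T$, is absolutely continuous with respect to $T$. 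I would construct its density $f$ by a Hahn-decomposition-style order-convergent exhaustion, using the universal completeness of $R(T)$ to form the required suprema and the partial inverse theorem to solve the local equations $T(fPe)={\frak f}(Pe)$. The $T$-strong bound is precisely what keeps the density in $L^2(T)$: the cancellation carried out in the norm-preservation step forces $\|f\|_{T,2}\le\|{\frak f}\|$, so $f\in L^2(T)$. Finally, $R(T)$-homogeneity and order continuity of both ${\frak f}$ and $T_f$ propagate the identity ${\frak f}(g)=T(fg)$ from the elements $Pe$ to all of $L^2(T)$.

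I expect the density construction in the positive case to be the crux; everything else is bookkeeping with the averaging property, (\ref{holder}), and the partial inverse theorem. In particular I would take care that the exhaustion uses only the order completeness of $R(T)$ and never the $T$-strong completeness of $L^2(T)$, so that the present theorem can serve as the foundation for the subsequent Riesz--Fischer theorem and completeness result rather than depending on them.
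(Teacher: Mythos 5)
Your handling of the routine parts is essentially correct: the averaging property plus the H\"older inequality (\ref{holder}) give $T_f\in\hat E$ with $\|T_f\|\le\|f\|_{T,2}$; testing against $g=f$ gives injectivity via strict positivity of $T$; and the partial-inverse cancellation of $w^2\le\|T_f\|\,w$ (on ${\cal B}_w$, with both sides vanishing off ${\cal B}_w$ because of the already-established bound $\|T_f\|\le w$) correctly yields $\|T_f\|=\|f\|_{T,2}$. The genuine gap is surjectivity, which is the entire substance of the theorem, and there you have written a plan rather than a proof. The sentence ``I would construct its density $f$ by a Hahn-decomposition-style order-convergent exhaustion'' names the missing argument without supplying it: nothing is said about how the exhaustion is indexed, why it order-converges, why its limit satisfies $T(fPe)={\frak f}(Pe)$ for every band projection $P$, or why $f$ lands in $L^2(T)$ rather than merely in the universal completion. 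That last point is subtler than your remark suggests: the bound $\|f\|_{T,2}\le\|{\frak f}\|$ cannot be obtained by ``the cancellation carried out in the norm-preservation step,'' since that step presupposes $f\in L^2(T)$ --- one needs $T(f^2)$ to exist before $T_f(f)\le\|T_f\|\,\|f\|_{T,2}$ means anything; a correct argument must prove uniform $L^2$-bounds for the approximants produced by the exhaustion and pass to an order limit. A further, smaller gap: your reduction to positive ${\frak f}$ assumes that ${\frak f}^\pm$ are again $R(T)$-homogeneous and $T$-strongly bounded, i.e.\ that the lattice operations of ${\cal L}_n(E,R(T))$ restrict to $\hat E$; in the paper this is exactly the content of Lemma \ref{lem-RK} and the lemma following it, whose homogeneity part itself requires the partial-inverse argument, so it cannot simply be waved through.

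For context, the paper does not prove this theorem at all: it is imported verbatim from the reference [KKW-1] (``A Hahn-Jordan decomposition and Riesz-Frechet representation theorem in Riesz spaces''), which is precisely where the Hahn-Jordan/Radon-Nikod\'ym machinery you gesture at is developed. So your instinct about the correct strategy --- reduce to ${\frak f}\ge 0$, treat $P\mapsto{\frak f}(Pe)$ as an $R(T)$-valued measure absolutely continuous with respect to $T$, and build a density, avoiding any appeal to completeness of $L^2(T)$ so as not to be circular --- matches the cited source. But as it stands, the crux of your proposal is an unexecuted sketch, not a proof.
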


A direct application of Theorem \ref{thm-final} gives that $T\in \hat{E}$ since $\Psi(e)=T$ and
$L^2(T)$ is an $R(T)$ module in which $ef=f$ for all $f\in L^2(T)$.

\begin{lem}[Riesz-Kantorovich]\label{lem-RK}
The space $E^*$ is a Riesz space with respect to the partial ordering ${\frak f}\le {\frak g}$ if and only if ${\frak f}(x)\le {\frak g}(x)$ for all $x\in E_+$. This partial ordering is equivalent to defining the lattice operations by
$$({\frak f}\vee {\frak g})(x):=\sup\{{\frak f}(y)+{\frak g}(z)\,|\,y,z\in E_+, y+z=x\}$$ 
and
$$({\frak f}\wedge {\frak g})(x):=\inf\{{\frak f}(y)+{\frak g}(z)\,|\,y,z\in E_+, y+z=x\}$$ 
for all $x\in E_+$ and extending these operators to $E$ by the Kantorovich Theorem, \cite[page 7]{AB}.
Here ${\frak f}_\alpha\downarrow {\frak 0}$ in $E^*$ if and only if ${\frak f}_\alpha(x)\downarrow 0$ in $E$ for each $x\in E_+$.
$E^*$ is Dedekind complete and an $R(T)$-module.
\end{lem}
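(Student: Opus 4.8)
The plan is to identify $E^*$ as a Riesz subspace of the Dedekind complete space ${\cal L}_b(E,R(T))$ and then transport all of the structure from the ambient space, so that the only genuinely new work concerns the interaction of the lattice operations with $R(T)$-homogeneity. Because $R(T)$ is Dedekind complete, the classical Riesz--Kantorovich theorem (\cite[page 7]{AB}) already supplies that ${\cal L}_b(E,R(T))$ is a Dedekind complete Riesz space, ordered by ${\frak f}\le{\frak g}$ iff ${\frak f}(x)\le{\frak g}(x)$ on $E_+$, with lattice operations given by exactly the stated supremum and infimum formulas and with ${\frak h}^+(x)=\sup\{{\frak h}(y)\mid 0\le y\le x\}$ for $x\in E_+$. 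Since $E^*$ is already a linear subspace of ${\cal L}_b(E,R(T))$ that is closed under sums and real scalar multiples without losing $R(T)$-homogeneity, I would first reduce the problem to a single verification: using ${\frak f}\vee{\frak g}={\frak g}+({\frak f}-{\frak g})^+$ and $|{\frak h}|={\frak h}^++{\frak h}^-$, it suffices to show that ${\frak h}^+\in E^*$ whenever ${\frak h}\in E^*$, i.e. that the Riesz--Kantorovich positive part is again $R(T)$-homogeneous.

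The heart of the argument, and the step I expect to be the main obstacle, is establishing ${\frak h}^+(cx)=c\,{\frak h}^+(x)$ for $c\in R(T)^+$ and $x\in E_+$. The inequality $c\,{\frak h}^+(x)\le{\frak h}^+(cx)$ is straightforward: for $0\le w\le x$ one has $0\le cw\le cx$ in $E_u$, hence $c\,{\frak h}(w)={\frak h}(cw)\le{\frak h}^+(cx)$, and taking the supremum over $w$ while using that multiplication by $c\ge 0$ is an orthomorphism, hence order continuous on the Archimedean space $R(T)$, yields the claim. The reverse inequality is where the possible non-invertibility of $c$ must be confronted, and here I would invoke the canonical partial inverse $\tilde c\in R(T)^+$ of $c$ supplied by the partial inverse theorem of Section~2, together with the band projection $P=P_c$ onto ${\cal B}_c$, for which $c\tilde c=\tilde cc=Pe$. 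Since $Pe$ implements $P$ multiplicatively and $Pc=c$, one gets $cx\in{\cal B}_c$, so any $y$ with $0\le y\le cx$ lies in ${\cal B}_c$; setting $w:=\tilde c\,y\ge 0$ then gives $cw=(Pe)y=y$ and $w\le\tilde c(cx)=Px\le x$, so that ${\frak h}(y)={\frak h}(cw)=c\,{\frak h}(w)\le c\,{\frak h}^+(x)$, and a supremum over $y$ finishes the reverse inequality. Homogeneity on $E_+$ then extends to all of $R(T)$ and $E$ by additivity and the decompositions $c=c^+-c^-$, $x=x^+-x^-$, so that $E^*$ is a Riesz subspace with the asserted order and operations.

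With $E^*$ recognised as a Riesz subspace, the remaining assertions follow by the same mechanism. For Dedekind completeness I would take a subset of $E^*$ bounded above, pass to its upward directed set of finite suprema, which stays in $E^*$ by the previous step, compute the supremum pointwise on $E_+$ in ${\cal L}_b(E,R(T))$, and note that the orthomorphism/order-continuity argument again forces this supremum to be $R(T)$-homogeneous, hence to lie in $E^*$. The characterisation of downward order convergence is then immediate: ${\frak f}_\alpha\downarrow{\frak 0}$ in $E^*$ means $({\frak f}_\alpha)$ is downward directed and bounded below by ${\frak 0}$, so its infimum is computed pointwise and gives ${\frak f}_\alpha(x)\downarrow 0$ for each $x\in E_+$, while conversely pointwise convergence ${\frak f}_\alpha(x)\downarrow 0$ identifies $x\mapsto\inf_\alpha{\frak f}_\alpha(x)=0$ as the infimum of the net in $E^*$. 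Finally, setting $(c{\frak f})(x):=c\,{\frak f}(x)$ for $c\in R(T)$ makes $E^*$ an $R(T)$-module, since $c{\frak f}$ is additive, order bounded, and $R(T)$-homogeneous by commutativity of the $f$-algebra, and the module axioms are inherited from the algebra structure of $R(T)$.
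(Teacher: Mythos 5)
Your proposal is correct and follows essentially the same route as the paper: embed $E^*$ in the Dedekind complete space ${\cal L}_b(E,R(T))$, invoke the classical Riesz--Kantorovich theorem there, use the canonical partial inverse in $R(T)$ (exactly the paper's key tool) to show the lattice operations preserve $R(T)$-homogeneity, and obtain Dedekind completeness from order continuity of multiplication by elements of $R(T)$. Your reduction to the positive part ${\frak h}^+$ with decompositions $0\le y\le cx$, in place of the paper's direct treatment of ${\frak f}\vee{\frak g}$ with decompositions $y+z=\alpha x$, is only a cosmetic reorganization of the same argument.
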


\begin{proof}
 If ${\frak f},{\frak g}\in E^*$ then 
 ${\frak f},{\frak g}\in {\cal L}_b(E, R(T))$ and hence (as ${\cal L}_b(E, R(T))$ is a Riesz space) ${\frak f}+{\frak g}\in {\cal L}_b(E, R(T))$.
 Further, for $\alpha\in R(T)$ and $x\in E$, since ${\frak f}$ and ${\frak g}$ are $R(T)$-homogeneous, we have that 
 $$({\frak f}+{\frak g})(\alpha x):={\frak f}(\alpha x)+{\frak g}(\alpha x)=\alpha({\frak f}(x)+{\frak g}(x))=:\alpha ({\frak f}+{\frak g})(x).$$
Thus ${\frak f}+{\frak g}\in E^*$.

To show that $E^*$ is a sublattice of ${\cal L}_b(E, R(T))$ it remains to be shown that ${\frak f}\vee {\frak g}\in E^*$ for each ${\frak f},{\frak g}\in E^*$, with ${\frak f}\vee {\frak g}$ as defined in the lemma statement. 
This also ensures that ${\frak f}\vee_{{\cal L}_b(E, R(T))}{\frak g}={\frak f}\vee_{E^*}{\frak g}$. 
As $E^*\subset {\cal L}_b(E, R(T))$ to show that
${\frak f}\vee {\frak g}\in E^*$ is the same is 
  ${\frak f}\vee_{{\cal L}_b(E, R(T))}{\frak g}=:{\frak f}\vee {\frak g}$ 
  it suffices to show that ${\frak f}\vee{\frak g}$ is in $E^*$. 
  Since ${\frak f}\vee{\frak g}\in {{\cal L}_b(E, R(T))}$ it remains only to show that it is $R(T)$ homogeneous. 
  
For $\alpha\in R(T)_+$ and $x\in E_+$ we have
$$({\frak f}\vee{\frak g})(\alpha x):=\sup\{{\frak f}(y)+{\frak g}(z)\,|\,y,z\in E_+, y+z=\alpha x\}.$$ 
If $y,z\in E_+$ and $y+z=\alpha x$, then $y,z \in B_\alpha$ where $B_\alpha$ is the band in $E$ generated by $\alpha$. 
 
Now as $R(T)$ is universally complete, $\alpha$ has a partial inverse $\beta\in R(T)_+\cap B_\alpha$ such that $$\alpha\cdot \beta =\beta\cdot \alpha= P_\alpha e$$
where $P_\alpha$ denotes the band projection onto the band $B_\alpha$, generated by $\alpha$.
Thus $\beta y+\beta z=P_\alpha x$ and $\alpha\beta y=y$ and similarly for $z$.  
Hence
\begin{eqnarray*}({\frak f}\vee{\frak g})(\alpha x)&=&\sup\{\alpha({\frak f}(\beta y)+{\frak g}(\beta z)\,|\,y,z\in E_+,\beta y+\beta z=P_\alpha x\}\\
&\le&  
\sup\{\alpha({\frak f}(y')+{\frak g}(z')\,|\,y',z'\in E_+,y'+z'=P_\alpha x\}\\
&\le&  
\sup\{\alpha({\frak f}(y')+{\frak g}(z')\,|\,y',z'\in E_+,y'+z'=x\}\\
&=&\alpha ({\frak f}\vee{\frak g})(x)
\end{eqnarray*}
and
\begin{eqnarray*}
\alpha\cdot ({\frak f}\vee{\frak g})(x)&=&\sup\{\alpha\cdot ({\frak f}(y)+{\frak g}(z))\,|\,y,z\in E_+, y+z=x\}\\
&=&\sup\{({\frak f}(\alpha y)+{\frak g}(\alpha z))\,|\,y,z\in E_+, y+z=x\}\\
&\le &\sup\{({\frak f}(\alpha y)+{\frak g}(\alpha z))\,|\,y,z\in E_+, \alpha y+\alpha z=\alpha x\}\\
&\le &\sup\{({\frak f}(y')+{\frak g}(z'))\,|\,y',z'\in E_+, y'+z'=\alpha x\}\\
&=&({\frak f}\vee{\frak g})(\alpha x)
\end{eqnarray*}
Thus $({\frak f}\vee{\frak g})(\alpha x)=\alpha ({\frak f}\vee{\frak g})(x)$ for $\alpha \in R(T)_+$ and $x\in E_+$. 
After proving the analogous result for ${\frak f}\wedge{\frak g}$ and as we know $({\frak f}\vee{\frak g})(-x)=-({\frak f}\wedge{\frak g})(x)$ the homogeneity follows for all $x\in E$ and $\alpha \in R(T)$.

Finally we show that $E^*$ is Dedekind complete. 
As ${\cal L}_b(E, R(T))$ is Dedekind complete, it suffices to show that $E^*$ is order closed in ${\cal L}_b(E, R(T))$.
 If $({\frak f}_\gamma)$ is a net in $E^*$ with order limit ${\frak f}$ in ${\cal L}_b(E, R(T))$ then for each $\alpha \in R(T)$ we have that ${\frak f}_\gamma(\alpha x)=\alpha{\frak f}_\gamma$. 
 Further by order continuity of  multiplication by elements of $R(T)$ it follows that the net $(\alpha {\frak f}_\gamma)$ has order limit $\alpha {\frak f}$. 
 However, for each $x\in E$, $({\frak f}_\gamma(\alpha x))$ has order limit ${\frak f}(\alpha x)$ and $(\alpha{\frak f}_\gamma(x))$ has order limit $\alpha{\frak f}(x)$ in $R(T)\subset E$. 
 Thus ${\frak f}(\alpha x)=\alpha{\frak f}(x)$ giving ${\frak f}\in E^*$.
\qed
\end{proof}

\begin{lem}
The space $\hat{E}$ is  an $R(T)$-module and a Dedekind complete Riesz subspace of $E^*$.
\end{lem}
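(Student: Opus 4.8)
The plan is to realize $\hat{E}$ as a solid subspace (an ideal) of the Dedekind complete Riesz space $E^*$ produced in Lemma~\ref{lem-RK}. Once solidity is established, the Riesz subspace property and Dedekind completeness both follow from standard structure theory, while the $R(T)$-module structure is a routine consequence of the $R(T)$-homogeneity built into the $R(T)$-valued norm $\|\cdot\|$.

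First I would verify that $\hat{E}$ is an $R(T)$-submodule of $E^*$. If ${\frak f},{\frak g}\in\hat{E}$ are dominated by $k_1,k_2\in R(T)^+$ respectively, then $|({\frak f}+{\frak g})(g)|\le|{\frak f}(g)|+|{\frak g}(g)|\le(k_1+k_2)\|g\|_{T,2}$, so ${\frak f}+{\frak g}\in\hat{E}$, and in fact $\|{\frak f}+{\frak g}\|\le\|{\frak f}\|+\|{\frak g}\|$. For $\alpha\in R(T)$ the $R(T)$-homogeneity of ${\frak f}$ gives $(\alpha{\frak f})(g)=\alpha{\frak f}(g)$, whence $|(\alpha{\frak f})(g)|=|\alpha|\,|{\frak f}(g)|\le|\alpha|k_1\|g\|_{T,2}$ and $\alpha{\frak f}\in\hat{E}$. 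Thus $\hat{E}$ is closed under the module operations inherited from $E^*$.

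The central step is the estimate that, for ${\frak f}\in\hat{E}$ and $x\in E_+$, the modulus computed in $E^*$ satisfies
\begin{equation}
|{\frak f}|(x)\le\|{\frak f}\|\,\|x\|_{T,2}.\label{modbound}
\end{equation}
By the Riesz--Kantorovich formula, $|{\frak f}|(x)=\sup\{|{\frak f}(y)|\,:\,y\in E,\ |y|\le x\}$ for $x\in E_+$. The key observation is that $\|\cdot\|_{T,2}$ is monotone: if $|y|\le x$ then $y^2=|y|^2\le x^2$ in the $f$-algebra $E_u$ (multiplication by positive elements preserves order), so positivity of $T$ yields $T(y^2)\le T(x^2)$, and since the square root is order preserving on $R(T)^+$ by functional calculus, $\|y\|_{T,2}\le\|x\|_{T,2}$. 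Combining this with (\ref{norm-bound}) gives $|{\frak f}(y)|\le\|{\frak f}\|\,\|y\|_{T,2}\le\|{\frak f}\|\,\|x\|_{T,2}$ for every admissible $y$; taking the supremum produces (\ref{modbound}). I expect this monotonicity-of-the-modulus step to be the main obstacle, since it is precisely where the quantitative norm bound must be reconciled with the purely order-theoretic description of $|{\frak f}|$.

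With (\ref{modbound}) in hand, solidity follows quickly. If ${\frak g}\in E^*$ and $|{\frak g}|\le|{\frak f}|$ with ${\frak f}\in\hat{E}$, then for every $g\in E$ we have $|{\frak g}(g)|\le|{\frak g}|(|g|)\le|{\frak f}|(|g|)\le\|{\frak f}\|\,\||g|\|_{T,2}=\|{\frak f}\|\,\|g\|_{T,2}$, so ${\frak g}\in\hat{E}$; hence $\hat{E}$ is an ideal of $E^*$. In particular $|{\frak f}|\in\hat{E}$ whenever ${\frak f}\in\hat{E}$, so $\hat{E}$ is a Riesz subspace (sublattice) of $E^*$, and since every ideal of a Dedekind complete Riesz space is again Dedekind complete, the Dedekind completeness of $E^*$ from Lemma~\ref{lem-RK} transfers to $\hat{E}$. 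This yields all three assertions: $\hat{E}$ is an $R(T)$-module, a Riesz subspace of $E^*$, and Dedekind complete.
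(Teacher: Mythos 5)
Your proof is correct, and it organizes the argument differently from the paper. The paper verifies the two properties separately and by hand: it shows ${\frak f}\vee{\frak g}\in\hat{E}$ by bounding the Riesz--Kantorovich supremum $\sup\{{\frak f}(y)+{\frak g}(z)\,|\,y,z\in E_+,\ y+z=x\}$ by $(k_{\frak f}+k_{\frak g})\|x\|_{T,2}$ (implicitly using the same monotonicity of $\|\cdot\|_{T,2}$ that you prove explicitly via $|y|\le x\Rightarrow y^2\le x^2$ and the order-preserving square root), and it then proves Dedekind completeness directly by taking a set $C\subset\hat{E}_+$ bounded above by ${\frak g}\in\hat{E}_+$, forming ${\frak h}=\sup C$ in the Dedekind complete space $E^*$, and observing that $0\le{\frak h}\le{\frak g}$ forces ${\frak h}$ to inherit the strong bound of ${\frak g}$. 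You instead prove the single stronger statement that $\hat{E}$ is \emph{solid} in $E^*$, via the modulus estimate $|{\frak f}|(x)\le\|{\frak f}\|\,\|x\|_{T,2}$ for $x\in E_+$, and then obtain both the sublattice property and Dedekind completeness from the standard facts that an ideal is a sublattice and that an ideal of a Dedekind complete Riesz space is Dedekind complete. The two arguments share their analytic core --- monotonicity of the $R(T)$-valued norm plus the Riesz--Kantorovich formulas, and indeed the paper's completeness step (${\frak h}\le{\frak g}$ implies ${\frak h}\in\hat{E}$) is exactly your solidity applied to positive elements --- but your route yields more: $\hat{E}$ is an ideal of $E^*$, so suprema of subsets of $\hat{E}$ computed in $E^*$ already lie in $\hat{E}$, and it handles arbitrary bounded sets rather than only sets of positive functionals, which the paper's completeness argument treats without comment. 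What the paper's version buys in exchange is self-containedness: it never needs the modulus formula $|{\frak f}|(x)=\sup\{{\frak f}(y)\,|\,|y|\le x\}$ (a consequence of Lemma \ref{lem-RK} but not stated there) nor the ideal-completeness fact from general theory. One small point to make explicit in your write-up: the modulus you estimate is taken in $E^*$, which is legitimate because the paper's proof of Lemma \ref{lem-RK} shows $E^*$ is a sublattice of ${\cal L}_b(E,R(T))$, so the Riesz--Kantorovich formula for $|{\frak f}|$ applies unambiguously.
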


\begin{proof}
From its definition it is clear that $\hat{E}$ is a vector subspace of $E^*$ and is an $R(T)$-module.
It remains to prove that $\hat{E}$ is a sublattice of $E^*$ and that it is Dedekind complete.

To show that $\hat{E}$ is a sublattice of $E^*$ we observe that if ${\frak f},{\frak g}\in \hat{E}$ then there exist $k_{\frak f},k_{\frak g}\in R(T)_+$ so that 
$|{\frak f}(x)|\le k_{\frak f}\|x\|_{T,2}$ and $|{\frak g}(x)|\le k_{\frak g}\|x\|_{T,2}$ for all $x\in E$. 
From Lemma \ref{lem-RK},
$$|({\frak f}\vee {\frak g})(x)|\le\sup\{|{\frak f}(y)+{\frak g}(z)|\,|\,y,z\in E_+, y+z=x\}\le (k_{\frak f}+k_{\frak g})\|x\|_{T,2}$$ 
for all $x\in E$, giving ${\frak f}\vee{\frak g}\in \hat{E}$.

For the Dedekind completeness of we observe that if $C\subset \hat{E}_+$ is a non-empty set bounded above by ${\frak g}\in \hat{E}_+$ then $C\subset {E^*}_+$ and is bounded above by ${\frak g}$ in $E^*$.
Thus ${\frak h}:=\sup C$ exists in $E^*$, as $E^*$ is Dedekind complete. Further ${\frak h}\le {\frak g}$ so there exists $k\in R(T)_+$ so that 
$$|{\frak h}(x)|\le |{\frak g}(x)|\le k\|x\|_{T,2}.$$
Thus ${\frak h}\in \hat{E}$. If  $\hat{{\frak h}}\in \hat{E}$ were an upper bound on $C$ in $\hat{E}$ then $\hat{{\frak h}}$ is also an upper bound for $C$ in $E^*$ making ${\frak h}\le \hat{{\frak h}}$, thus ${\frak h}$ is the least upper bound of $C$. 
\qed
\end{proof}

\begin{thm}
$T$ is a weak order unit for $\hat{E}$ .
\end{thm}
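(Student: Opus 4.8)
The plan is to transport the question to $L^2(T)$ through the Riesz-Frechet map $\Psi$ of Theorem \ref{thm-final}. Since $\Psi(e)=T$ and $\Psi$ is an additive, $R(T)$-homogeneous bijection of $L^2(T)$ onto $\hat{E}$, it suffices to establish two things: that $\Psi$ is a \emph{lattice} isomorphism, and that $e$ is a weak order unit of $L^2(T)$. Granting these, for ${\frak f}\in\hat{E}$ I would write ${\frak f}=\Psi(f)$; then $|{\frak f}|=\Psi(|f|)$ and
\[
|{\frak f}|\wedge T=\Psi(|f|)\wedge\Psi(e)=\Psi(|f|\wedge e),
\]
so $|{\frak f}|\wedge T={\frak 0}$ forces $|f|\wedge e=0$, hence $f=0$ and ${\frak f}={\frak 0}$. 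Together with $T=\Psi(e)\ge {\frak 0}$, this is exactly the statement that $T$ is a weak order unit of $\hat{E}$.

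First I would establish that $\Psi$ is an order isomorphism, from which the lattice-isomorphism property follows automatically, since a bijection between Riesz spaces whose inverse is also positive preserves all lattice operations. That $\Psi$ is positive is immediate: for $f,g\in L^2(T)_+$ the product $fg$ is positive in the $f$-algebra $E_u$ and lies in $L^1(T)$ by \eqref{holder}, so $\Psi(f)(g)=T(fg)\ge0$, i.e. $\Psi(f)\ge {\frak 0}$ in the Riesz-Kantorovich order of Lemma \ref{lem-RK}.

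The hard part will be the positivity of $\Psi^{-1}$, and this is exactly where strict positivity of $T$ is needed. Suppose $\Psi(f)\ge {\frak 0}$ and test against $g=f^-\in L^2(T)_+$. Using $f^+f^-=0$ in $E_u$ one has $f\,f^-=(f^+-f^-)f^-=-(f^-)^2$, whence
\[
0\le\Psi(f)(f^-)=T(f\,f^-)=-T\big((f^-)^2\big)\le0 .
\]
Thus $T\big((f^-)^2\big)=0$ with $(f^-)^2\ge0$, and strict positivity of $T$ gives $(f^-)^2=0$, so $f^-=0$ and $f\ge0$. Hence both $\Psi$ and $\Psi^{-1}$ are order preserving, $\Psi$ is an order isomorphism, and therefore a lattice isomorphism; in particular $|\Psi(f)|=\Psi(|f|)$ and $\Psi$ commutes with $\wedge$, as used in the first paragraph.

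It remains to see that $e$ is a weak order unit of $L^2(T)$. Since $e$ is the algebraic unit of the $f$-algebra $E_u$, any $f$ with $|f|\wedge e=0$ satisfies $|f|=|f|\cdot e=0$ in $E_u$, so $e$ is a weak order unit of $E_u$; as $L^2(T)$ is a Riesz subspace of $E_u$, the same implication holds there. This completes the plan. The only genuine obstacle is the $\Psi^{-1}$-positivity step, and within it the point that $T(fg)\ge0$ for every $g\ge 0$ cannot collapse $f^-$ to zero without strict positivity; everything else is a transfer of the weak-order-unit property of $e$ across the order isomorphism $\Psi$.
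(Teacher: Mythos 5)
Your proof is correct, but it follows a genuinely different route from the paper's. The paper works directly with the order-density characterization: writing $y({\frak f})=\Psi^{-1}({\frak f})$ for ${\frak f}\in\hat{E}_+$, it evaluates $({\frak f}\wedge nT)(x)$ by the Riesz--Kantorovich formula of Lemma \ref{lem-RK}, reduces it to $T(y({\frak f})x)+\inf_{0\le v\le x}T(v(ne-y({\frak f})))$, and uses $(ne-y({\frak f}))^-\downarrow 0$ (since $e$ is a weak order unit of $E$) to conclude ${\frak f}\wedge nT\uparrow {\frak f}$ --- note the paper's final display ``$\sup_n({\frak f}\wedge nT)(x)=0$'' is a slip for ``$={\frak f}(x)$''. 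No lattice properties of $\Psi$ are used there. You instead use the disjointness characterization ($T>0$ and $|{\frak f}|\wedge T=0\Rightarrow{\frak f}=0$), which is legitimate since $\hat{E}$ was already shown to be Dedekind complete, hence Archimedean, and you transport the weak-order-unit property of $e$ across $\Psi$ after first proving $\Psi$ is a lattice isomorphism. That isomorphism step is precisely the paper's Theorem \ref{thm-order-pr}, which appears \emph{after} the weak-order-unit theorem but is proved independently of it (via the band projection $P_{f^-}$ and evaluation at $p_{f^-}=P_{f^-}e$), so your reordering creates no circularity; indeed your test-element argument $0\le\Psi(f)(f^-)=-T\bigl((f^-)^2\bigr)$ is a slicker proof of that positivity statement, and the semiprimeness step $(f^-)^2=0\Rightarrow f^-=0$ is the very one the paper uses in Lemma \ref{lem-oc}. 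The trade-off: the paper's computation keeps the theorem self-contained and yields the explicit approximation ${\frak f}\wedge nT\uparrow{\frak f}$, while your route is shorter once the order isomorphism is in hand and delivers Theorem \ref{thm-order-pr} as a by-product.
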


\begin{proof} Let ${\frak f}\in \hat{E}_+$.
For $x\in L^2(T)$ with $x\ge 0$ we have
\begin{eqnarray*}
\sup_{n\in\N} ({\frak f}\wedge nT)(x)&=&\sup_{n\in\N} \left(\inf_{u+v=x,u,v\ge 0}({\frak f}(u)+nT(v))\right)\\
&=&\sup_{n\in\N} \left(\inf_{u+v=x,u,v\ge 0}T(y({\frak f})u+nv)\right)\\
&=&\sup_{n\in\N} \left(\inf_{x\ge v\ge 0}T(y({\frak f})(x-v)+nv)\right)\\
&=&\sup_{n\in\N} \left(T(y({\frak f})x)+\inf_{x\ge v\ge 0}T(v(ne-y({\frak f})))\right)\\
&=&T(y({\frak f})x)+\sup_{n\in\N} \left(\inf_{x\ge v\ge 0}T(v(ne-y({\frak f})))\right).
\end{eqnarray*}
Here
$$-T(x(ne-y({\frak f}))^-)\le\inf_{x\ge v\ge 0}T(v(ne-y({\frak f})))\le 0$$
But $e$ is a weak order unit so $(ne-y({\frak f}))^-\downarrow 0$ in order as $n\to\infty$ giving that
$-T(x(ne-y({\frak f}))^-)\uparrow 0$ in order as $n\to\infty$. Thus
 $$\sup_{n\in\N} \left(\inf_{x\ge v\ge 0}T(v(ne-y({\frak f}))\right)=0$$
 and 
 $$\sup_{n\in\N} ({\frak f}\wedge nT)(x)=0$$
 making $T$ a weak order unit for $\hat{E}$.
\qed
\end{proof}

\begin{thm}\label{thm-order-pr}
$\Psi$ is a bijection between $E_+$ and $\hat{E}_+$. Hence $\Psi$ and $\Psi^{-1}$ are order preserving bijections. In particular $\Psi(f)^\pm=\Psi(f^\pm)$, $\Psi(f)\vee\Psi(g)=\Psi(f\vee g)$ and 
$\Psi(f)\wedge\Psi(g)=\Psi(f\wedge g)$, for all $f,g\in E$.
\end{thm}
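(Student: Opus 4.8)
The plan is to build entirely on Theorem~\ref{thm-final}, which already supplies that $\Psi$ is an additive, $R(T)$-homogeneous (hence real-linear) bijection of $E=L^2(T)$ onto $\hat E$. All of the assertions then collapse to the single claim that $\Psi$ maps the positive cone $E_+$ \emph{exactly} onto $\hat E_+$: once the cones correspond, the order-isomorphism property and the preservation of the lattice operations follow by formal arguments. I would therefore establish the two inclusions $\Psi(E_+)\subseteq\hat E_+$ and $\Psi^{-1}(\hat E_+)\subseteq E_+$ separately, and then harvest the consequences.

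The forward inclusion is immediate. For $f\in E_+$ and any $g\in E_+$ the product $fg$ is positive in the $f$-algebra $E_u$, so $\Psi(f)(g)=T(fg)\ge 0$ by positivity of $T$. Recalling from the Riesz--Kantorovich ordering in Lemma~\ref{lem-RK} that positivity of a functional is tested on $E_+$, this gives $\Psi(f)\in\hat E_+$.

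The converse inclusion is the crux of the argument and the only genuinely delicate point. Suppose $\Psi(f)\in\hat E_+$, so that $T(fg)\ge 0$ for every $g\in E_+$. The key idea is to test this against the specific vector $g=f^-\in E_+$ (which lies in $L^2(T)$ since $0\le f^-\le|f|$). Using that $f^+$ and $f^-$ are disjoint, whence $f^+f^-=0$ in the $f$-algebra, I compute $f\cdot f^-=(f^+-f^-)f^-=-(f^-)^2$, so that
$$0\le\Psi(f)(f^-)=T\bigl(f\cdot f^-\bigr)=-T\bigl((f^-)^2\bigr)=-\|f^-\|_{T,2}^{\,2}\le 0.$$
Hence $\|f^-\|_{T,2}=0$, and since $\|\cdot\|_{T,2}$ is an $R(T)$-valued norm this forces $f^-=0$, i.e. $f\in E_+$ (equivalently, strict positivity of $T$ applied to $(f^-)^2\ge 0$ yields $(f^-)^2=0$, hence $f^-=0$). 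Combining the two inclusions with the bijectivity from Theorem~\ref{thm-final} shows that $\Psi$ restricts to a bijection of $E_+$ onto $\hat E_+$.

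Finally I would deduce the order and lattice statements from this cone bijection together with additivity of $\Psi$. If $f\le g$ then $g-f\in E_+$, so $\Psi(g)-\Psi(f)=\Psi(g-f)\in\hat E_+$, giving $\Psi(f)\le\Psi(g)$; the reverse implication uses the second inclusion, so both $\Psi$ and $\Psi^{-1}$ are order preserving. An additive order-isomorphism automatically preserves suprema and infima: $\Psi(f\vee g)$ is an upper bound of $\{\Psi(f),\Psi(g)\}$, and any upper bound ${\frak h}$ satisfies $\Psi^{-1}({\frak h})\ge f,g$, hence $\Psi^{-1}({\frak h})\ge f\vee g$ and ${\frak h}\ge\Psi(f\vee g)$; thus $\Psi(f\vee g)=\Psi(f)\vee\Psi(g)$, and dually $\Psi(f\wedge g)=\Psi(f)\wedge\Psi(g)$. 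Taking $g=0$ and using $\Psi(0)=0$ gives $\Psi(f^+)=\Psi(f)^+$, and applying this to $-f$ together with linearity gives $\Psi(f^-)=\Psi(f)^-$, completing the proof. The substantive work is entirely in the converse inclusion, where the choice of test vector $g=f^-$ and the $f$-algebra identity $f^+f^-=0$ carry the argument; everything else is formal.
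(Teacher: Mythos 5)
Your proof is correct, and it shares the paper's overall strategy---reduce everything to the cone correspondence $\Psi(E_+)=\hat{E}_+$ and then derive the order and lattice identities formally---but the crux, the converse inclusion, is carried out by a genuinely different computation. The paper supposes $f^-\neq 0$ and evaluates the Riesz--Kantorovich infimum formula for $\Psi(f)\wedge 0$ at the test vector $x=p_{f^-}=P_{f^-}e$, the component of $e$ in the band generated by $f^-$; this yields $(\Psi(f)\wedge 0)(p_{f^-})=(-Tf^-)\wedge 0$, which contradicts positivity since strict positivity of $T$ forces $Tf^->0$. You instead evaluate $\Psi(f)$ directly at $g=f^-$ and use the $f$-algebra identity $f^+f^-=0$ to obtain $0\le T(ff^-)=-T\bigl((f^-)^2\bigr)$, whence $T\bigl((f^-)^2\bigr)=0$ and $f^-=0$ by strict positivity. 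Your route is more elementary: it needs no band projections and no lattice computation inside the dual $\hat{E}$, only the definition of the cone (positivity tested on $E_+$) and the multiplicative structure of $E_u$, whereas the paper's argument leans on the Riesz--Kantorovich machinery of Lemma \ref{lem-RK}. The one inference you need beyond that, $(f^-)^2=0\Rightarrow f^-=0$ (semiprimeness of the unital $f$-algebra $E_u$, or equivalently the $R(T)$-norm property), is exactly the step the paper itself uses in the proof of Lemma \ref{lem-oc} ($T(h^2)=0$ gives $h^2=0$, thus $h=0$), so nothing outside the paper's toolkit is invoked; and your test vector is legitimate since $0\le f^-\le |f|$ places $f^-$ in $L^2(T)$ by solidity. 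Both proofs ultimately rest on strict positivity of $T$; yours localizes the contradiction at $f^-$ itself, the paper's at the band generated by $f^-$, and your formal derivation of $\Psi(f)^\pm=\Psi(f^\pm)$ and the $\vee,\wedge$ identities from the cone bijection is a correct expansion of what the paper compresses into its final sentence.
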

\begin{proof}
From Theorem \ref{thm-final}, the map $\Psi:E\to\hat{E}$ is a bijection.
If ${\frak f}\in \hat{E}_+$  then there exists $f\in E$ such that 
 ${\frak f}=\Psi(f)$ and it follows from Lemma \ref{lem-RK} that
 \begin{eqnarray*}
0\le (\Psi(f)\wedge 0)(x)&:=&\inf\{T(fy)\,|\,y,z\in E_+, y+z=x\}\\
&=&
\inf\{T(fy)\,|\,0\le y\le x\}
\end{eqnarray*} 
for all $x\in E_+$. 
If $f\notin E_+$, then $f^-=0\vee (-f)\ne 0$. Let $P_{f^-}$ be the band projection onto the band generated by $f^-$ (all bands in $E$ are principal bands) and $p_{f^-}=P_{f^-}e\in E_+$. Taking $x=p_{f^-}$ gives
$$0\le {\frak f}(x)=({\frak f}\wedge 0)(x)=(\Psi(f)\wedge 0)(x)=(-Tf^-)\wedge 0\notin E_+,$$
since $T$ is a strictly positive operator which gives $Tf^->0$. Hence a contradiction and  $f\in E_+$.

If $f\in E_+$ then for $x\in E_+$, 
$$(\Psi(f)\wedge 0)(x)=\inf\{T(fy)\,|\,y,z\in E_+, y+z=x\}\ge 0$$
 as $y\ge 0$ and $T$ is a positive operator. Thus $\Psi(f)\in \hat{E}_+$ and $\Psi$ is a bijection between $E_+$ and $\hat{E}_+$.

The remaining claims of the theorem follow directly $\Psi$ be a linear bijection between $E_+$ and $\hat{E}_+$.
\qed
\end{proof}

\begin{thm}
$\Psi$ is a bijection between the components of $e$ in $E$ and the components of $T$ in $\hat{E}$.
\end{thm}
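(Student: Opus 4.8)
The plan is to exploit the fact, already established in Theorems \ref{thm-final} and \ref{thm-order-pr}, that $\Psi$ is a Riesz isomorphism from $E=L^2(T)$ onto $\hat{E}$ with $\Psi(e)=T$. Recall that a component of a positive element $u$ in a Riesz space is an element $c$ with $0\le c\le u$ and $c\wedge(u-c)=0$; the components of $e$ in $E$ and the components of $T$ in $\hat{E}$ are precisely the two sets I wish to place in bijection.

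First I would assemble the structural properties of $\Psi$. By Theorem \ref{thm-final}, $\Psi$ is additive, $R(T)$-homogeneous and bijective, and $\Psi(e)=T$. By Theorem \ref{thm-order-pr}, $\Psi$ restricts to a bijection of $E_+$ onto $\hat{E}_+$ and preserves the lattice operations, so that $\Psi(f\vee g)=\Psi(f)\vee\Psi(g)$ and $\Psi(f\wedge g)=\Psi(f)\wedge\Psi(g)$ for all $f,g\in E$. Combined with additivity and $\Psi(0)=0$, this makes $\Psi$ an order isomorphism: $0\le c\le e$ holds in $E$ if and only if $0\le\Psi(c)\le\Psi(e)=T$ holds in $\hat{E}$.

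The core step is then a direct translation of the defining identity. For $c\in E$ with $0\le c\le e$, additivity gives $\Psi(e-c)=\Psi(e)-\Psi(c)=T-\Psi(c)$, and preservation of $\wedge$ gives
$$\Psi\bigl(c\wedge(e-c)\bigr)=\Psi(c)\wedge\bigl(T-\Psi(c)\bigr).$$
Since $\Psi$ is injective and $\Psi(0)=0$, the left-hand side vanishes if and only if $c\wedge(e-c)=0$, while the right-hand side vanishes exactly when $\Psi(c)$ is a component of $T$. Hence $c$ is a component of $e$ if and only if $\Psi(c)$ is a component of $T$. As $\Psi$ is already a bijection of $E$ onto $\hat{E}$ carrying $E_+$ onto $\hat{E}_+$, restricting it to the set of components of $e$ yields a bijection onto the set of components of $T$, which is the assertion.

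I do not anticipate a genuine obstacle here, since all the heavy lifting---bijectivity, order preservation, and compatibility with the lattice operations---has been carried out in the preceding theorems. The only point requiring care is to invoke injectivity together with $\Psi(0)=0$ in order to transfer the disjointness relation $c\wedge(e-c)=0$, and not merely the order relations $0\le c\le e$, across $\Psi$.
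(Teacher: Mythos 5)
Your proposal is correct and takes essentially the same route as the paper: both arguments reduce the claim to Theorem \ref{thm-order-pr}, using $0\le p\le e \Leftrightarrow 0\le \Psi(p)\le \Psi(e)=T$ together with the computation $\Psi(p)\wedge\Psi(e-p)=\Psi\bigl(p\wedge(e-p)\bigr)=\Psi(0)=0$. The paper disposes of the converse by remarking that one applies the same argument to $\Psi^{-1}$, which is exactly what your two-sided ``if and only if'' formulation via injectivity accomplishes.
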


\begin{proof}
We begin by showing that $\Psi(p)$ is a component of $T$ for each $p$ a component of $e$. Using Theorem \ref{thm-order-pr}, we have $0\le \Psi(p)\le \Psi(e)=T$.
 Again using Theorem \ref{thm-order-pr}, we have
\begin{eqnarray*}
\Psi(p)\wedge \Psi(e-p)=\Psi(p\wedge (e-p))=\Psi(0)=0.
\end{eqnarray*} 
 Hence we have shown that $\Psi(p)$ is a component of $T$ for each $p$ a component of $e$.
  For the converse we merely work with $\Psi^{-1}$.
  \qed
\end{proof}

From the above, for components $p,q$ of $e$ in $E$. we have that
\begin{eqnarray}\label{comp-mult}
\Psi(p)\cdot \Psi(q)=\Psi(p\cdot q)
\end{eqnarray}
where multiplication is with respect to the $f$-algebra structures of $E_e$ and $\hat{E}_T$ with $e$ and $T$ being their respective algebraic units.

\begin{lem}\label{lem-oc}
$\Psi$ and $\Psi^{-1}$ are order continuous and 
\begin{eqnarray}\label{mult}
\Psi(p)\cdot \Psi(q)=\Psi(p\cdot q).
\end{eqnarray}
for all $p\in E_e$ and $q\in E$.
\end{lem}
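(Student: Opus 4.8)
The plan is to prove the two assertions in turn, first the order continuity of $\Psi$ and $\Psi^{-1}$, and then to use this, together with (\ref{comp-mult}) and a Freudenthal approximation, to establish (\ref{mult}). For the order continuity I would rely only on the fact, from Theorem \ref{thm-order-pr}, that $\Psi$ and $\Psi^{-1}$ are order-preserving bijections between $E_+$ and $\hat E_+$. Suppose $f_\alpha\downarrow 0$ in $E$. Then $(\Psi(f_\alpha))$ is a decreasing net in $\hat E_+$, and since $\hat E$ is Dedekind complete its infimum ${\frak h}:=\inf_\alpha\Psi(f_\alpha)$ exists. Writing $h:=\Psi^{-1}({\frak h})\in E_+$ and using that $\Psi^{-1}$ is order preserving, the inequalities ${\frak h}\le\Psi(f_\alpha)$ give $h\le f_\alpha$ for every $\alpha$, so $h\le\inf_\alpha f_\alpha=0$; as $h\ge 0$ this forces $h=0$ and hence ${\frak h}=\Psi(0)=0$. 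Thus $\Psi(f_\alpha)\downarrow 0$, and order continuity of $\Psi$ follows by additivity and the usual squeeze; interchanging the roles of $\Psi$ and $\Psi^{-1}$ yields the order continuity of $\Psi^{-1}$.

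For (\ref{mult}) I would first extend (\ref{comp-mult}) to the case where $p$ is a component of $e$ and $q\in E$ is arbitrary. The tool is the standard $f$-algebra fact that multiplication by a component of the algebraic unit is a band projection: thus $p\cdot q=P_pq$ in $E$, viewed inside $E_u$, where $P_p$ is the band projection onto ${\cal B}_p$, and, since $\Psi(p)$ is a component of $T$ by the preceding theorem, $\Psi(p)\cdot\Psi(q)=Q_{\Psi(p)}\Psi(q)$, where $Q_{\Psi(p)}$ is the band projection of $\hat E$ onto ${\cal B}_{\Psi(p)}$. Because $\Psi$ is a lattice isomorphism it maps ${\cal B}_p$ onto ${\cal B}_{\Psi(p)}$ and intertwines the two band projections, $\Psi\circ P_p=Q_{\Psi(p)}\circ\Psi$; this is seen by applying $\Psi$ to the identity $P_pq^{\pm}=\sup_n(q^{\pm}\wedge np)$ and using that $\Psi$ preserves suprema and the lattice operations (Theorem \ref{thm-order-pr}) together with its $R(T)$-homogeneity. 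Combining these gives $\Psi(p\cdot q)=\Psi(P_pq)=Q_{\Psi(p)}\Psi(q)=\Psi(p)\cdot\Psi(q)$ whenever $p$ is a component of $e$.

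To reach a general $p\in E_e$ I would fix $q\in E$ and compare the maps $p\mapsto\Psi(p\cdot q)$ and $p\mapsto\Psi(p)\cdot\Psi(q)$ on $E_e$. Both are real-linear and, by the order continuity of $\Psi$ just established and the order continuity of multiplication by a fixed element in the Archimedean $f$-algebras concerned, order continuous in $p$; moreover they agree on every $e$-simple element by the previous paragraph and linearity. Approximating $p^{\pm}\in(E_e)_+$ from below by $e$-simple elements via Freudenthal's spectral theorem and passing to the order limit then forces the two maps to coincide on all of $E_e$. The products are meaningful throughout, since $p\in E_e$ and $q\in E=L^2(T)$ give $p\cdot q\in L^2(T)$, while $\Psi(p)\in\hat E_T$ acts on $\hat E$.

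The step I expect to be the main obstacle is the simultaneous identification of multiplication by a component with a band projection in \emph{both} $f$-algebra structures, $E_e$ and $\hat E_T$, and the verification that $\Psi$ intertwines the resulting band projections; a secondary point needing care is the order continuity of the module multiplication on $\hat E$, which I would deduce from the order continuity of multiplication in Archimedean $f$-algebras. Once these are secured, the order continuity from the first part and the Freudenthal approximation make the remainder routine.
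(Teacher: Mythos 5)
Your proof is correct, but it takes a genuinely different route from the paper's in both halves. For the order continuity the paper argues from the concrete representation: $\Psi(f_\alpha)(g)=T(f_\alpha g)\to T(fg)$ using order continuity of multiplication $L^2(T)\times L^2(T)\to L^1(T)$ and of $T$; and for $\Psi^{-1}$ it shows that if ${\frak f}_\alpha\downarrow 0$ and $\Psi^{-1}({\frak f}_\alpha)\downarrow h$ then $T(hg)=0$ for all $g\in E_+$, so $T(h^2)=0$ and the strict positivity of $T$ forces $h=0$. You instead use only that $\Psi$ is a bipositive linear bijection (Theorem \ref{thm-order-pr}), hence an order isomorphism, and order isomorphisms preserve infima; this is shorter, treats $\Psi$ and $\Psi^{-1}$ symmetrically, and needs neither strict positivity nor the representation (you could even drop the appeal to Dedekind completeness: any lower bound ${\frak h}$ of $(\Psi(f_\alpha))$ satisfies $\Psi^{-1}({\frak h})\le\inf_\alpha f_\alpha=0$, hence ${\frak h}\le\Psi(0)=0$). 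For (\ref{mult}) both arguments extend (\ref{comp-mult}) by linearity, Freudenthal's theorem and order continuity, but you organize the limits differently: the paper extends to linear combinations of components in both slots and then must approximate both $p$ and the possibly $e$-unbounded $q$, whereas you dispose of the $q$-variable exactly, by identifying multiplication by a component of the unit with the corresponding band projection in both $f$-algebra structures and proving the intertwining $\Psi\circ P_p=Q_{\Psi(p)}\circ\Psi$ (with $Q_{\Psi(p)}$ the band projection onto ${\cal B}_{\Psi(p)}$ in $\hat{E}$) from the formula $P_pq^{\pm}=\sup_n(q^{\pm}\wedge np)$ and Theorem \ref{thm-order-pr}; Freudenthal is then needed in the $p$-variable only. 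Your variant costs the (standard) $f$-algebra fact that components of the algebraic unit act as band projections, but it buys a single one-variable limiting argument and, more importantly, a precise meaning for the product $\Psi(p)\cdot\Psi(q)$ when $\Psi(q)\notin\hat{E}_T$ --- exactly the interpretation the paper itself relies on later in Theorem \ref{thm-band}.
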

\begin{proof}
Let $f_\alpha$ be a net in $E$ with order limit $f$ and let $g\in E$, then 
$$\Psi(f_\alpha)(g)=T(f_\alpha g)\to T(fg)$$
as pairwise multiplication is an order continuous map from  $L^2(T)\times L^2(T)\to L^1(T)$, and $T$ is order continuous on $L^1(T)$.  Thus giving that $\Psi$ is order continuous.

Suppose that $({\frak f}_\alpha)\subset \hat{E}_+$ is a downwards directed net with ${\frak f}_\alpha\downarrow 0$. Then  $(\Psi^{-1}({\frak f}_\alpha))\subset E_+$ is a downwards directed net with $\Psi^{-1}({\frak f}_\alpha)\downarrow h\ge 0$. So for $g\in E_+$ we have
$$T(\Psi^{-1}({\frak f}_\alpha)g)\downarrow T(hg)$$ from the order continuity of multiplication and of $T$.
However
$$T(\Psi^{-1}({\frak f}_\alpha)g)={\frak f}_\alpha(g)\downarrow 0.$$
Thus $T(hg)=0$ for all $g\in E_+$, in particular for $g=h$. Thus $T(h^2)=0$, which with the strict positivity of $T$ gives $h^2=0$, and thus $h=0$. Hence $\Psi^{-1}$ is order continuous.

As $\Psi$ is linear, (\ref{comp-mult}) extends immediately to $p$ and $q$ being linear combinations of components of $e$.  Applying Freudenthal's theorem along with the order continuity of $\Psi$ and multiplication, the result follows.
\qed
\end{proof}

It should be noted that (\ref{mult}) can be extended to $p\in L^\infty(T)$ where
$$L^\infty(T)=\{f\in E^u\,|\, |f|\le \alpha \mbox{ for some } \alpha\in R(T)\},$$
 see \cite{KRW} for more details on $L^\infty(T)$ and this multiplication.

We can characterize the band projections on $\hat{E}$.

\begin{thm}\label{thm-band}
The band projections on $\hat{E}$ are the maps $\hat{P}=\Psi P\Psi^{-1}$ where $P$ is a band projection on $E$.
\end{thm}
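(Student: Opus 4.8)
The plan is to exploit that $\Psi:E\to\hat{E}$ is, in the order-theoretic sense, an isomorphism: by Theorem~\ref{thm-order-pr} it is an additive, $R(T)$-homogeneous bijection that restricts to a bijection $E_+\to\hat{E}_+$ and preserves the lattice operations, and by Lemma~\ref{lem-oc} both $\Psi$ and $\Psi^{-1}$ are order continuous. Since $\hat{E}$ is Dedekind complete (and $E$ inherits this through $\Psi$), this lets me reduce the statement to the standard operator characterisation of band projections: a real-linear idempotent $Q$ on a Dedekind complete Riesz space is a band projection if and only if $0\le Q\le I$ (see \cite{AB}). Conjugation by a positive order isomorphism preserves precisely this data, so the correspondence $P\mapsto \Psi P\Psi^{-1}$ should realise the claim.

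For the forward inclusion, fix a band projection $P$ on $E$ and set $\hat{P}:=\Psi P\Psi^{-1}$, a real-linear self-map of $\hat{E}$. Idempotency is immediate, since
$$\hat{P}^2=\Psi P\Psi^{-1}\Psi P\Psi^{-1}=\Psi P^2\Psi^{-1}=\Psi P\Psi^{-1}=\hat{P}.$$
For the order bounds, take ${\frak f}\in\hat{E}_+$ and write ${\frak f}=\Psi(f)$ with $f\in E_+$, which is legitimate by Theorem~\ref{thm-order-pr}. Then $\hat{P}{\frak f}=\Psi(Pf)$ and ${\frak f}-\hat{P}{\frak f}=\Psi((I-P)f)$; as $0\le Pf\le f$ and $\Psi$ is positive, both images lie in $\hat{E}_+$, giving $0\le\hat{P}\le I$ on $\hat{E}$. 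The cited characterisation then makes $\hat{P}$ a band projection on $\hat{E}$.

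The converse is the symmetric statement obtained by interchanging the roles of $\Psi$ and $\Psi^{-1}$. Given a band projection $\hat{Q}$ on $\hat{E}$, put $Q:=\Psi^{-1}\hat{Q}\Psi$; the same computation yields $Q^2=Q$, and using that $\Psi^{-1}$ carries $\hat{E}_+$ onto $E_+$ gives $0\le Q\le I$ on $E$, so $Q$ is a band projection on $E$ with $\hat{Q}=\Psi Q\Psi^{-1}$. This exhibits every band projection of $\hat{E}$ in the required form and closes the correspondence.

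I expect the only point genuinely needing care to be the justification that the order-bound characterisation is applicable here, so that verifying idempotency together with $0\le\hat{P}\le I$ suffices in place of directly producing the underlying projection band. This rests on the Dedekind completeness of $\hat{E}$ and $E$ and on $\Psi$ mapping $E_+$ \emph{exactly} onto $\hat{E}_+$; once these are in place the argument is purely formal. Should one wish to bypass the operator characterisation, an alternative is to check directly that $\Psi$ sends projection bands of $E$ to projection bands of $\hat{E}$, using order continuity of $\Psi$ and $\Psi^{-1}$ to guarantee that images and preimages of bands are bands and that the associated projections correspond; but the conjugation argument above is the shorter route.
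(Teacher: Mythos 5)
Your proof is correct, but it follows a genuinely different route from the paper's. The paper's argument is structural: it uses that $T$ is a weak order unit of the Dedekind complete space $\hat{E}$, so the range of a band projection $\hat{P}$ is a principal band generated by the component $\hat{P}T$ of $T$; it then invokes the correspondence between components of $T$ and components $p$ of $e$ (so $\hat{P}T=\Psi(p)$), the fact that a band projection acts as $f$-algebra multiplication by the corresponding component of the weak order unit, and the multiplicativity identity $\Psi(p)\cdot\Psi(q)=\Psi(p\cdot q)$ of Lemma \ref{lem-oc}, to conclude $\hat{P}{\frak g}=\Psi(P\Psi^{-1}{\frak g})$. You instead run a pure conjugation argument: $\Psi$ is an additive, $R(T)$-homogeneous (hence real-linear) bijection carrying $E_+$ exactly onto $\hat{E}_+$ (Theorem \ref{thm-order-pr}), so $\hat{P}=\Psi P\Psi^{-1}$ is idempotent with $0\le\hat{P}\le I$, and then you invoke the characterization in \cite{AB} of order projections as idempotents between $0$ and $I$ (stated there for Archimedean spaces, which is safe here since both $E=L^2(T)$ and $\hat{E}$ are Dedekind complete, the latter proved earlier in the paper). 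Your approach buys economy and completeness: it needs neither the component-correspondence theorem, nor the weak-order-unit theorem, nor Lemma \ref{lem-oc}, and it explicitly establishes both inclusions of ``are precisely,'' whereas the paper's proof as written only verifies that every band projection on $\hat{E}$ has the stated form. The paper's approach buys extra information that is reused afterwards: it identifies the generator of the range band as $\Psi(p)$ and exhibits $\hat{P}$ concretely as multiplication by a component of $T$, which feeds directly into the subsequent corollary and the Radon--Nikod\'ym remark. The only point to keep straight in your version is the one you flag yourself: the operator characterization must be applied on $\hat{E}$ and on $E$, so the Dedekind completeness (or at least Archimedean property) of both spaces, and the fact that $\Psi(E_+)=\hat{E}_+$ exactly, are essential inputs; both are available from the paper.
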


\begin{proof}
Let $\hat{P}$ be a band projection on $\hat{E}$ with range $\hat{B}$.  As all band in $\hat{E}$ are principal bands and as $T$ is a weak order unit for $\hat{E}$,  it follows that $\hat{P}T$ is a generator of $\hat{B}$, further it is a component of $T$.  Thus there is a component $p$ of $e$ in $E$ so that $\Psi(p)= \hat{P}T$.

Let ${\frak g}\in \hat{E}$ then the action of $\hat{P}$ on ${\frak g}$ is given by 
$$\hat{P}{\frak g}=(\hat{P}T)\cdot {\frak g}=\Psi(p)\cdot \Psi(\Psi^{-1}({\frak g})).$$
Applying (\ref{comp-mult}) we have
$$\Psi(p)\cdot \Psi(\Psi^{-1}({\frak g}))=\Psi(p\cdot \Psi^{-1}({\frak g}))=\Psi(P \Psi^{-1}({\frak g})),$$
where $P$ is the band projection in $E$ generated by $p$.
Thus $\hat{P}{\frak g}=\Psi(P \Psi^{-1}({\frak g})).$
\qed
\end{proof}

\begin{cor}
For each band projection $\hat{P}$ on $\hat{E}$ we have that the 
band projection $P:=\Psi^{-1}\hat{P}\Psi$ on $E$ has
$\hat{P}{\frak f}={\frak f} P, $ for all ${\frak f}\in\hat{E}$.
\end{cor}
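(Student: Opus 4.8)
The plan is to reduce the claimed identity $\hat P{\frak f}={\frak f}P$ to the self-adjointness of band projections with respect to the $T$-bilinear pairing $(f,g)\mapsto T(fg)$ on $L^2(T)$. First I would note that $P:=\Psi^{-1}\hat P\Psi$ really is a band projection on $E$: by Theorem \ref{thm-band} every band projection $\hat P$ on $\hat E$ has the form $\hat P=\Psi P_0\Psi^{-1}$ for some band projection $P_0$ on $E$, whence $\Psi^{-1}\hat P\Psi=P_0$. The same theorem records the explicit action $\hat P{\frak f}=\Psi\bigl(P\Psi^{-1}({\frak f})\bigr)$ for all ${\frak f}\in\hat E$.

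I would then test both sides of the asserted equality on an arbitrary $g\in E$. Writing $f:=\Psi^{-1}({\frak f})$, so that ${\frak f}(g)=T(fg)$, the explicit action gives $(\hat P{\frak f})(g)=\Psi(Pf)(g)=T\bigl((Pf)g\bigr)$, while by definition $({\frak f}P)(g)={\frak f}(Pg)=T\bigl(f\,(Pg)\bigr)$. Hence the corollary is equivalent to the single scalar identity
$$T\bigl((Pf)g\bigr)=T\bigl(f\,(Pg)\bigr)\qquad\mbox{for all }f,g\in L^2(T).$$

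To establish this I would invoke the $f$-algebra structure of $E_u$. A band projection $P$ onto a band $B$ acts as multiplication by the component $p:=Pe$ of $e$, that is $Pf=p\cdot f$: indeed $p\cdot(Pf)=Pf$ since $p$ is the algebraic unit of the projection band $B$, while $p\cdot(I-P)f=0$ because $p\in B$ and $(I-P)f$ is disjoint from $B$, and disjoint elements of an $f$-algebra have zero product. Commutativity and associativity of the $f$-multiplication then give $(Pf)g=(pf)g=f(pg)=f(Pg)$ in $E_u$, and applying $T$ yields the desired identity; equivalently, expanding $f=Pf+(I-P)f$ and $g=Pg+(I-P)g$ shows both sides equal $T\bigl((Pf)(Pg)\bigr)$, the cross terms vanishing as products of disjoint elements. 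The one point needing care, and the step I expect to be the main obstacle, is ensuring that $(Pf)g$ and $f(Pg)$ lie in $L^1(T)=\mbox{dom}(T)$ so that $T$ may legitimately be applied; this follows from the H\"older inequality (\ref{holder}) together with the fact that band projections map $L^2(T)$ into itself, since $(Pf)^2=p^2f^2=P(f^2)\in L^1(T)$. With these products in hand the identity $T((Pf)g)=T(f(Pg))$, and hence $\hat P{\frak f}={\frak f}P$, follows at once.
\qed
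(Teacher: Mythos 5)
Your proposal is correct and follows essentially the same route as the paper: identify $P$ via Theorem \ref{thm-band}, evaluate $(\hat{P}{\frak f})(g)=T\bigl((Pf)g\bigr)$ and $({\frak f}P)(g)=T\bigl(f(Pg)\bigr)$, and conclude from the commutation $(Pf)g=f(Pg)$ --- exactly the identity $\Psi^{-1}{\frak f}\cdot Pg=P\Psi^{-1}{\frak f}\cdot g$ the paper invokes. The only difference is that you supply the justification (band projections act as multiplication by the component $p=Pe$ in the $f$-algebra $E_u$, plus the domain check via H\"older) for a step the paper states without proof, which is a welcome addition rather than a divergence.
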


\begin{proof}
Here $\hat{P}=\Psi P\Psi^{-1}$. Hence, for all ${\frak f}\in\hat{E}$ and $g\in E$,
$$(\hat{P}{\frak f})(g)=(\Psi P\Psi^{-1}{\frak f})(g)=\Psi(P\Psi^{-1}{\frak f})(g)=T(\Psi^{-1}{\frak f}\cdot Pg)={\frak f}(Pg)={\frak f}\circ P(g),$$
for all ${\frak f}\in \hat{E}$ and $g\in E$. Here we have used that 
$\Psi^{-1}{\frak f}\cdot Pg=P\Psi^{-1}{\frak f}\cdot g$.
\qed
\end{proof}

  \begin{note}
   Each ${\frak f}\in \hat{E}$ can be considered as a $R(T)$-valued Radon measure $\mu_{\frak f}$ defined on the components of $e$ in $E$ by $\mu_{\frak f}(p)={\frak f}(p)$. In this sense the Riesz representation of the measure is as $\mu_{\frak f}(p)=T(\Psi({\frak f})\cdot p)$ and $\Psi({\frak f})$ is the Radon-Nikod\'ym derivative of $\mu_{\frak f}$ with respect to $\mu_T$.
  \end{note}
\section{Completeness}

As in the scalar case, we can define a norm on $\hat{E}$, however in the case under consideration here this is an $R(T)$-valued norm or in the
notation of \cite{KRW} a $T$-norm. For ${\frak f}\in \hat{E}$ we define
\begin{equation}
 \|{\frak f}\|=\inf\{k\in R(T)^+\,|\,|{\frak f}(x)|\le k\|x\|_{T,2} \mbox{ for all } x\in E\}.\label{dual-norm}
\end{equation}
As $R(T)$ is Dedekind complete it follows that $\|{\frak f}\|\in R(T)^+$.

For each $x\in E$, $\{k\in R(T)^+\,|\,|{\frak f}(x)|\le k\|x\|_{T,2} \}$ is a closed (with respect to order limits in $E$) convex cone in $R(T)^+$. Thus
$$Y:=\bigcap_{x\in E} \{k\in R(T)^+\,|\,|{\frak f}(x)|\le k\|x\|_{T,2} \}$$
 is convex additive and  closed w.r.t. order limits suprema and infima in $R(T)^+$.
 Thus $Y$ has a unique minimal element 
$$\|{\frak f}\|:=\inf  \bigcap_{x\in E} \{k\in R(T)^+\,|\,|{\frak f}(x)|\le k\|x\|_{T,2} \}.$$
As $\|{\frak f}\|\in Y$ we have
$|{\frak f}(x)|\le \|{\frak f}\|\,\|x\|_{T,2}$ for each ${\frak f}\in \hat{E}$ and $x\in {L}^{2}(T)$.

\begin{thm}
 The dual space  $\hat{E}$ is strongly complete. 
\end{thm}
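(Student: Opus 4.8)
The plan is to construct the strong limit of a given strong Cauchy net by taking order limits pointwise on $E$ and then checking that the resulting functional lies in $\hat E$ and is indeed the strong limit. Let $({\frak f}_\alpha)$ be a strong Cauchy net in $\hat E$, so that $v_\alpha:=\sup_{\beta,\gamma\ge\alpha}\|{\frak f}_\beta-{\frak f}_\gamma\|$ is eventually defined in $R(T)$ and $v_\alpha\downarrow 0$ in order. Fix $g\in E$. For $\beta,\gamma\ge\alpha$ the dual-norm bound (\ref{norm-bound}) gives $|{\frak f}_\beta(g)-{\frak f}_\gamma(g)|\le\|{\frak f}_\beta-{\frak f}_\gamma\|\,\|g\|_{T,2}\le v_\alpha\|g\|_{T,2}=:w_\alpha$, with $w_\alpha\downarrow 0$. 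In particular the tail $\{{\frak f}_\beta(g)\mid\beta\ge\alpha\}$ is order bounded in $R(T)$, so by Dedekind completeness of $R(T)$ the quantities $u_\alpha:=\inf_{\beta\ge\alpha}{\frak f}_\beta(g)$ and $U_\alpha:=\sup_{\beta\ge\alpha}{\frak f}_\beta(g)$ exist and satisfy $0\le U_\alpha-u_\alpha\le w_\alpha$. Since $u_\alpha\uparrow$, $U_\alpha\downarrow$ and $\inf_\alpha(U_\alpha-u_\alpha)=0$, the order limit ${\frak f}(g):=\sup_\alpha u_\alpha=\inf_\alpha U_\alpha$ exists in $R(T)$ and ${\frak f}_\alpha(g)\to{\frak f}(g)$ in order.

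First I would verify that $g\mapsto{\frak f}(g)$ is additive and $R(T)$-homogeneous; both follow from the order continuity of addition and of multiplication by a fixed element of $R(T)$, which let one pass these operations through the pointwise order limit. Next I would establish the domination $|{\frak f}(g)-{\frak f}_\alpha(g)|\le v_\alpha\|g\|_{T,2}$: for $\beta\ge\alpha$ one has $|{\frak f}_\beta(g)-{\frak f}_\alpha(g)|\le v_\alpha\|g\|_{T,2}$, and taking the order limit over $\beta$ preserves the inequality. Consequently $|{\frak f}(g)|\le(\|{\frak f}_\alpha\|+v_\alpha)\|g\|_{T,2}$ for every $g\in E$, so ${\frak f}$ is $T$-strongly bounded; being $T$-strongly bounded it is order bounded, and together with the additivity and $R(T)$-homogeneity already checked this gives ${\frak f}\in\hat E$.

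It then remains to check that ${\frak f}_\alpha\to{\frak f}$ strongly. The domination just obtained says precisely that $v_\alpha$ is one of the constants $k$ in the infimum defining $\|{\frak f}-{\frak f}_\alpha\|$, whence $\|{\frak f}-{\frak f}_\alpha\|\le v_\alpha$; since $v_\alpha\downarrow 0$ this gives $\|{\frak f}-{\frak f}_\alpha\|\downarrow 0$, which is exactly strong convergence of $({\frak f}_\alpha)$ to ${\frak f}$ in $\hat E$.

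The main obstacle is the very first step: producing the candidate limit as a genuine element of $R(T)$ at each point. This rests on converting the strong Cauchy condition into order boundedness of the tails together with the squeeze $U_\alpha-u_\alpha\le w_\alpha\downarrow 0$, and on the Dedekind completeness of $R(T)$ to guarantee that the resulting order limit exists; once the pointwise limit is in hand, the remaining verifications are routine consequences of order continuity and of the definition of the $R(T)$-valued dual norm. I would emphasise that this argument is intrinsic to $\hat E$ and does not invoke $\Psi$, so that the strong completeness of $L^2(T)$ may subsequently be deduced by transporting completeness along the norm-preserving bijection $\Psi$ of Theorem \ref{thm-final}.
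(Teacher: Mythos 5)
Your proposal is correct and takes essentially the same route as the paper: you construct the pointwise limit via the squeeze $0\le U_\alpha-u_\alpha\le v_\alpha\|g\|_{T,2}$ with $u_\alpha\uparrow$, $U_\alpha\downarrow$ (which is exactly the paper's $\liminf_\alpha {\frak f}_\alpha(x)=\limsup_\alpha {\frak f}_\alpha(x)$ argument in different notation), then pass to the order limit in the Cauchy bound to get $|{\frak f}_\beta(g)-{\frak f}(g)|\le v_\alpha\|g\|_{T,2}$, from which both ${\frak f}\in\hat E$ and $\|{\frak f}_\beta-{\frak f}\|\le v_\alpha\downarrow 0$ follow just as in the paper. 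Your explicit check of $R(T)$-homogeneity of the limit functional is a minor presentational improvement over the paper, which only remarks on linearity, but it does not change the approach.
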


\begin{proof}
Let $({\frak f}_\alpha)$ be a strong Cauchy net in $\hat{E}$.
Since 
$$|{\frak f}_\alpha(x)-{\frak f}_\beta(x)|\le \|{\frak f}_\alpha-{\frak f}_\beta\| \,\|x\|_{T,2}$$
it follows that for each $x\in {L}^{2}(T)$, $({\frak f}_\alpha(x))$ is a Cauchy net in $R(T)$.
Eventually  
\begin{align*}
v_\alpha:=\sup_{\beta,\gamma\ge\alpha} \|{\frak f}_\beta-{\frak f}_\gamma\|_{T,2}
\end{align*}  
exists as an element of $R(T)$ and $v_\alpha\downarrow 0$.
Here by eventually we mean that there is there is $\beta$ in the index set of the net, so that for $\alpha \ge \beta$ the given statement holds.
So eventually 
\begin{eqnarray}\label{cauchy-bound}
|{\frak f}_\beta(x)-{\frak f}_\gamma(x)|\le v_\alpha\|x\|_{T,2},\quad\mbox{ for all } \beta,\gamma\ge \alpha,
\end{eqnarray}
 and the Cauchy net $({\frak f}_\alpha(x))$ is eventually bounded in $R(T)$.
 Here the net $(|{\frak f}_\beta(x)-{\frak f}_\gamma(x)|)$ is considered as being index by $(\alpha,\beta)$
 with $(\alpha,\beta)\le (\alpha_1,\beta_1)$ meaning $\alpha\le \alpha_1$ and $\beta\le \beta_1$.
We may thus define
$\underline{\frak f}(x):=\liminf_\alpha {\frak f}_\alpha(x)$, $\overline{\frak f}(x):=\limsup_\alpha {\frak f}_\alpha(x)$ in $R(T)$.
Here
$$0\le\overline{\frak f}(x)-\underline{\frak f}(x)
=\lim_\alpha(\sup_{\beta\ge\alpha}{\frak f}_\beta(x)-\inf_{\gamma\ge\alpha}{\frak f}_\gamma(x))
=\lim_\alpha\sup_{\beta,\gamma\ge\alpha}({\frak f}_\beta(x)-{\frak f}_\gamma(x))
\le \lim_\alpha v_\alpha\|x\|_{T,2}=0.$$
So we can set ${\frak f}(x):=\overline{\frak f}(x)=\underline{\frak f}(x)$ with $({\frak f}_\alpha(x))$ converging in order to ${\frak f}(x)$, see \cite{AKRW}, and hence
${\frak f}(x)$ is defined for each $x\in {L}^{2}(T)$.

From the linearity of order limits and the linearity of each ${\frak f}_\alpha$ it follows that ${\frak f}: {L}^{2}(T) \to R(T)$ is a linear map.
Taking the order limit in (\ref{cauchy-bound}) with respect to $\gamma$ gives
\begin{eqnarray}\label{limit-bound}
|{\frak f}_\beta(x)-{\frak f}(x)|\le v_\alpha\|x\|_{T,2},\quad\mbox{ for all } \beta\ge \alpha, x\in L^2(T).
\end{eqnarray}
Thus $$|{\frak f}(x)|\le |{\frak f}_\alpha(x)|+v_\alpha\|x\|_{T,2}\le (\|{\frak f}_\alpha\|+v_\alpha)\|x\|_{T,2}$$
for all $x\in L^2(T)$,
from which we have that ${\frak f}\in \hat{E}$.

Now  (\ref{limit-bound}) can be written as 
\begin{eqnarray*}
\|{\frak f}_\beta-{\frak f}\|\le v_\alpha,\quad\mbox{ for all } \beta\ge \alpha, 
\end{eqnarray*}
giving that
 $({\frak f}_\alpha)$ converges strongly to ${\frak f}$. 
\qed
\end{proof}

We recall from \cite{KRodW} some of the concepts of strong completeness as they relate to ${{L}}^2(T)$.

\begin{defs}
	We say that a net  $(f_\alpha)$ in ${{L}}^2(T)$, is a strong Cauchy net if 
	$$v_\alpha:=\sup_{\beta,\gamma\ge \alpha}\|f_\beta-f_\gamma\|_{T,2}$$
	is eventually defined and has order limit zero, i.e. there exists $\delta$ such that
        $v_\alpha$ is defined for all $\alpha\ge \delta$ and $\displaystyle{\inf_{\alpha\ge \delta} v_\alpha = 0}$.
\end{defs}
It should be noted that this is equivalent to requiring that $\|f_\beta-f_\gamma\|_{T,2}$ as a net indexed by $(\beta,\gamma)$ with componentwise directedness, converges to $0$ in order.
We are now in a position to give the definition of strong completeness.

\begin{defs}
        We say that ${{L}}^2(T)$ is strongly complete if each strong Cauchy net
 	$(f_\alpha)$ in ${{L}}^2(T)$, is strongly convergent in ${{L}}^2(T)$,
        i.e. there is $f\in {{L}}^2(T)$ so that  
	$$w_\alpha:=\sup_{\beta\ge \alpha}\|f_\beta-f\|_{T,2}$$
	is eventually defined and has order limit zero, i.e. there exists $\delta$ such that
        $w_\alpha$ is defined for all $\alpha\ge \delta$ and $\displaystyle{\inf_{\alpha\ge \delta} w_\alpha = 0}$.
\end{defs}

It should be noted that for the case of $E=L^2(\Omega,\Sigma,\mu)$ where $\mu$ is a finite measure and $Tf={\bf 1}\frac{1}{\mu(\Omega)}\int_\Omega f\,d\mu$,
where ${\bf 1}$ is the constant $1$ function, then ${{L}}^2(T)=E$, the vector norm $\|f\|_{T,2}=\|f\|_2$ is the standard $L^2$ norm and the 
concepts of strong Cauchy nets and strong completeness coincide with those of norm/strong Cauchy nets and norm/strong completeness.

\begin{thm}
 ${{L}}^2(T)$ is strongly complete.
\end{thm}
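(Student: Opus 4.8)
The plan is to transport the completeness problem across the Riesz–Frechet isomorphism $\Psi$ of Theorem \ref{thm-final} and invoke the strong completeness of the dual $\hat{E}$, which has already been established. Since $\Psi:{{L}}^2(T)\to\hat{E}$ is an additive, $R(T)$-homogeneous (hence real-linear) bijection that is $R(T)$-valued norm preserving in the sense $\|\Psi(f)\|=\|f\|_{T,2}$, a strong Cauchy net in ${{L}}^2(T)$ should map exactly to a strong Cauchy net in $\hat{E}$, and the strong limit produced there should pull back via $\Psi^{-1}$ to the desired limit in ${{L}}^2(T)$. So the entire argument is a change of variables; no new analysis of the $R(T)$-valued norm is needed.

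Concretely, let $(f_\alpha)$ be a strong Cauchy net in ${{L}}^2(T)$ and set ${\frak f}_\alpha:=\Psi(f_\alpha)$. The first step is to verify that $({\frak f}_\alpha)$ is strong Cauchy in $\hat{E}$. Here I would use linearity of $\Psi$ to write $\Psi(f_\beta)-\Psi(f_\gamma)=\Psi(f_\beta-f_\gamma)$, and then norm preservation to obtain $\|{\frak f}_\beta-{\frak f}_\gamma\|=\|\Psi(f_\beta-f_\gamma)\|=\|f_\beta-f_\gamma\|_{T,2}$ for all $\beta,\gamma$. Taking suprema over $\beta,\gamma\ge\alpha$ shows that the quantity $v_\alpha$ governing the strong Cauchy condition is literally identical for the two nets. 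Consequently $({\frak f}_\alpha)$ is strong Cauchy in $\hat{E}$.

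The second step applies the strong completeness of $\hat{E}$ to produce ${\frak f}\in\hat{E}$ with $({\frak f}_\alpha)$ converging strongly to ${\frak f}$, i.e. $w_\alpha:=\sup_{\beta\ge\alpha}\|{\frak f}_\beta-{\frak f}\|$ is eventually defined with order limit zero. Since $\Psi$ is a bijection, set $f:=\Psi^{-1}({\frak f})\in{{L}}^2(T)$. Using linearity and norm preservation once more gives $\|f_\beta-f\|_{T,2}=\|\Psi(f_\beta-f)\|=\|{\frak f}_\beta-{\frak f}\|$, so the supremum $\sup_{\beta\ge\alpha}\|f_\beta-f\|_{T,2}$ coincides with $w_\alpha$ and therefore is eventually defined with order limit zero. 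This is exactly the statement that $(f_\alpha)$ converges strongly to $f$ in ${{L}}^2(T)$, completing the proof.

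I expect no genuine obstacle in the final argument itself, since all the difficulty has been front-loaded into Theorem \ref{thm-final} (the norm-preserving bijection $\Psi$) and into the strong completeness of $\hat{E}$. The one point requiring care is the identity $\Psi(f_\beta)-\Psi(f_\gamma)=\Psi(f_\beta-f_\gamma)$: this must be justified from the additivity and $R(T)$-homogeneity of $\Psi$ asserted in Theorem \ref{thm-final} rather than assumed, and it is precisely what makes the $v_\alpha$ and $w_\alpha$ of the two nets agree. Everything else is a direct translation through $\Psi$ and $\Psi^{-1}$.
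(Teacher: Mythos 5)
Your proposal is correct and follows essentially the same route as the paper: the paper's proof likewise transports a strong Cauchy net through the norm-preserving, $R(T)$-linear bijection $\Psi$ of Theorem \ref{thm-final}, applies the strong completeness of $\hat{E}$, and pulls the limit back via $\Psi^{-1}$. Your extra care in spelling out $\Psi(f_\beta)-\Psi(f_\gamma)=\Psi(f_\beta-f_\gamma)$ and the identity of the $v_\alpha$ and $w_\alpha$ quantities only makes explicit what the paper leaves implicit.
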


\begin{proof}
 The map $\Psi: y\mapsto T_y$ is an $R(T)$-linear $R(T)$-valued vector norm preserving bijection between ${{L}}^2(T)$ and $\hat{E}$.
 So given a strong Cauchy net $(y_\alpha)$ in ${{L}}^2(T)$, $(\Psi(y_\alpha)$ is a strong Cauchy net in $\hat{E}$ and is thus
 strongly convergent to some ${\frak f}\in \hat{E}$. As $\Psi$ is a bijection, there is $y \in {{L}}^2(T)$ so that ${\frak f}=\Psi(y)=T_y$.
 Due to the $R(T)$-linearity of $\Psi$ and this map be $R(T)$-valued vector norm preserving, the strong convergence of 
 $(\Psi(y_\alpha)$ to ${\frak f}$ in $\hat{E}$ yields the strong convergence of $(y_\alpha)$ to $y$ in ${{L}}^2(T)$.
\qed
\end{proof}

\end{document}